\newtheorem{theorem}{Theorem}[section]
\newtheorem{lemma}[theorem]{Lemma}
\newtheorem{proposition}[theorem]{Proposition}
\newtheorem{corollary}[theorem]{Corollary}
\theoremstyle{definition}
\theoremstyle{remark}
\numberwithin{equation}{section}
\newcommand{\NN}{\mathbb{N}}
\newcommand{\CC}{\mathbb {C}}
\begin{document}
\setcounter{page}{1}
\title[Spectrums  and uniform mean ergodicity ]{Spectrums and uniform  mean ergodicity of weighted composition operators on Fock spaces }
\author [Werkaferahu Seyoum ]{Werkaferahu Seyoum}
\address{Department of Mathematics,
Kotebe Metropolitan University, Ethiopia}
\email{Werkaferahus@gmail.com}
\author[Tesfa  Mengestie*]{Tesfa  Mengestie* }\footnote{*Corresponding author}
\address{Mathematics section \\
Western Norway University of Applied Sciences\\
Klingenbergvegen 8, N-5414 Stord, Norway}
\email{Tesfa.Mengestie@hvl.no}

\subjclass[2010]{Primary: 47B32, 30H20; Secondary: 46E22, 46E20, 47B33}
 \keywords{ Fock spaces, Power bounded, Mean ergodic, Compact,  Composition,   Weighted composition operators, Spectrum }

 \begin{abstract}
 For  holomorphic pairs of symbols $(u, \psi)$, we  study  various  structures of  the weighted composition operator $ W_{(u,\psi)} f= u \cdot f(\psi)$ defined
 on the Fock spaces $\mathcal{F}_p$.   We have   identified operators    $W_{(u,\psi)}$ that have  power bounded and uniformly mean ergodic properties on the spaces. These properties are described in terms of easy to apply conditions  relying on  the values   $|u(0)|$  and  $|u(\frac{b}{1-a})|$ where  $ a$ and $b$ are coefficients    from   linear expansion of the symbol  $\psi$. The spectrum of the operators are also  determined  and applied  further   to prove results about    uniform mean ergodicity.
\end{abstract}
\maketitle
\section{Introduction}\label{sec1}
We denote by $\mathcal{H}(\CC)$ the  space of  analytic functions on  the complex plane $\CC$.  For pairs of functions $(u, \psi)$  in $\mathcal{H}(\CC)$,  the weighted composition operator $W_{(u,\psi)}$   is defined    by $W_{(u,\psi)}f= u\cdot  f(\psi), \   f\in \mathcal{H}(\CC)$. The operator  generalizes both the composition $C_\psi$ and multiplication $M_u$ operators since it can be factored as  $W_{(u,\psi)}= M_u C_\psi $. Weighted composition operators have been a subject of intense studies in the last several years partly because they  found applications in the description of isometries on spaces of analytic functions; see the  monographs  \cite{FJ, FJ1} for detailed accounts.  For studies on the various properties of
the operators, for example on the classical Fock spaces $\mathcal{F}_p$,  one may consult the works in \cite{Tle, TM4, TM0, UK} and the references therein. Recall that $\mathcal{F}_p$  are   spaces  consisting of all entire functions $f$ for which
 \begin{align*}
 \|f\|_{ p}= \begin{cases}\Big(\frac{p}{2\pi} \int_{\CC} |f(z)|^p
e^{-\frac{p|z|^2}{2}}  dA(z)\Big)^{\frac{1}{p}} <\infty, \ \  1\leq p<\infty\\
\sup_{z\in \CC}|f(z)|
e^{-\frac{|z|^2}{2}} <\infty, \ \ \ \  p= \infty,
\end{cases}
\end{align*} where $dA$ is the Lebesgue area measure on $\CC$. For each  function $f\in \mathcal{H}(\CC)$,  the subharmonicity of   $|f|^p$ implies that  the local point estimate
 \begin{align}
 \label{pointwise}
 |f(z)|^p
e^{-\frac{p|z|^2}{2}} \leq \int_{D(z,1)} |f(w)|^p
e^{-\frac{p|w|^2}{2}} dA(w)
 \end{align} holds  where  $D(z,1)$ is a disc of radius $1$ and center $z$.  This  implies further
  \begin{align}
 \label{global}
|f(z)|
\leq \bigg(\frac{2\pi}{p}\bigg)^{\frac{1}{p}} e^{\frac{|z|^2}{2}}  \|f\|_p.
 \end{align} By definition of the norm, the estimate  in \eqref{global}  is  valid for $p= \infty$ as well.

 \noindent
 The space $\mathcal{F}_2$ is a reproducing kernel Hilbert space with kernel function $K_w(z):= e^{\overline{w}z}$ and normalized kernel  $k_w:=\|K_w\|_2^{-1}K_w $. A straightforward calculation shows that  $k_w$ belongs to all  the Fock spaces  $\mathcal{F}_p$  with  a unit norm   $\|k_w\|_p= 1 $ for all   $w\in\CC$.

The rest of the manuscript is organized as follows. In Section~\ref{power}, we study the  power bounded property of the operators. As stated in Theorem~\ref{thm1} and Theorem~\ref{thm2},  these properties are  described in terms of  simple to apply conditions  which are merely based on  the values of the numbers  $|u(0)|$ or $|u(\frac{b}{1-a})|$ where  the constants $ a$ and $b$ are  from the  linear expansion of the symbol $\psi(z)= az+b$. The proofs of the results are presented in Subsections~\ref{pr1} and \ref{pr2}.
In Section~\ref{spectra},  we identify  the spectra of the operators  on $\mathcal{F}_p$  for all $1\leq p\leq \infty$; see Theorem~\ref{spectrum} whose proof is  given  in Subsection~\ref{prspec}. Section~\ref{uniform} contains several results  on the   uniform mean ergodic properties of the operators.

 We conclude this section with a word on notation.  The notion
 $U(z)\lesssim V(z)$ (or
equivalently $V(z)\gtrsim U(z)$) means that there is a constant
$C$ such that $U(z)\leq CV(z)$ holds for all $z$ in the set of a
question. We write $U(z)\simeq V(z)$ if both $U(z)\lesssim V(z)$
and $V(z)\lesssim U(z)$.
\section{Power bounded  $W_{(u,\psi)}$ }\label{power}
We start this section by  recalling certain definitions related to dynamics of an operator. Let $T$ be a bounded operator on a   Banach space $\mathcal{X}$. Then we set  the operator $T^n$ as the $n$-th iterate of  $T; T^n= T\circ T\circ ...\circ T$ $n$-times and $T^0= I$ where $I$ is the identity map on $\mathcal{X}$. The operator  $T$ is said to be power bounded  on  $\mathcal{X}$ if
    $\sup_{n\in \NN} \|T^n\| <\infty$. Obviously, any operator with norm  at  most $1$ is power bounded.   The notion of power boundedness or estimating  $ \|T^n\|$ plays an important roll in the study of  numerical stability of  initial value problems.    If
 \begin{align*}
 T_n:= \frac{1}{n}\sum_{m=1}^n T^m, \ \  n\in \NN
 \end{align*} denotes  its  the $n^{th}$ Ces$\grave{a}$ro means, then   $T$ is called
mean ergodic if there exists a bounded  operator $P$ on $\mathcal{X}$ such that  for each $f$ in $\mathcal{X}$
    \begin{align*}
    \lim_{n\to \infty}\|T_nf-Pf\|=0,
    \end{align*} and uniformly mean ergodic if the pointwise convergence above is uniform;
    \begin{align*}
    \lim_{n\to \infty}\|T_n-P\|=0.
    \end{align*}

     A straightforward simplification gives that  for each $n\in \NN$,  the relation
     \begin{align*}
\frac{1}{n} T^n= T_n-\frac{n-1}{n} T_{n-1}
     \end{align*} holds  where we set $T_0= I $ as the identity operator on $\mathcal{X}$. This immediately  implies that
      if $T$ is mean ergodic, then
           $\frac{1}{n} T^n x\to 0$ as $n\to \infty$ for all $x\in \mathcal{X}.$ Similarly, $\frac{1}{n} \|T^n \| \to 0$ whenever $T$ is uniformly mean ergodic.  A number of authors have studied ergodicity of operators on various
     functional spaces; see for example \cite{ABJ,Beltran, JB3, JB1}. The   monographs   \cite{Kr,Y} provides
     basic information on ergodic theory.   Inspired by all these works,    the authors and J. Bonet in \cite{MMJ} studied the  mean ergodicity of composition operators acting on generalized Fock spaces, and  concluded  that  all  bounded composition operators  on  Fock spaces $\mathcal{F}_p$ are power bounded whenever $1\leq p\leq \infty$.  In this section,  we  show that  this conclusion   is no longer true in general for the  weighted composition operators $ W_{(u,\psi)}$. It is found that $ W_{(u,\psi)}$ is power bounded only when the weight function $u$ satisfies  an interesting point value  condition  as precisely stated  in the next two main theorems and proposition.

     The study of the dynamics of an operator  is related to the study of  its  iterates. For   $f\in \mathcal{H}(\CC)$, a simple argument shows that the image of $f$  under the iterates  of  $W_{(u,\psi)}$ has the form
\begin{align}
\label{interplay}
W_{(u,\psi)}^n f=  f(\psi^n) u_n, \ \ u_n(z):= \prod_{j=0}^{n-1} u(\psi^j(z))
\end{align}
for  each  $n\in \NN$ and $\psi^0= I$  the identity map on $\CC$. The equations in \eqref{interplay} will be repeatedly used  in the sequel.

We now state the main results on power boundedness.   Depending on whether  $|a|=1$ or $|a|<1$, we give two  main  results  in Theorem~\ref{thm1} and Theorem~\ref{thm2}.
        \begin{theorem}\label{thm1}
         Let $1\leq p\leq \infty , \ u,  \psi \in \mathcal{H}(\CC)$  and $W_{(u,\psi)}$ be bounded on  $\mathcal{F}_p$, and hence $\psi(z)= az+b,\ \ |a|\leq 1$.  If $|a|=1$, then the following statements are equivalent.
         \begin{enumerate}
         \item $W_{(u,\psi)}$ is power  bounded on  $\mathcal{F}_p$;
         \item $(\|u_n\|_p)_{n}$ is a bounded sequence;
         \item $|u(0)| \leq e^{-\frac{|b|^2}{2}}$. In this case for each $n\in\NN$
         \begin{align}
\label{ittt}
\| W_{(u,\psi)}^n \|= \Big(|u(0)| e^{\frac{|b|^2}{2} }\Big)^n.
\end{align}
         \end{enumerate}
          \end{theorem}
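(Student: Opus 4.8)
The plan is to deduce the whole statement from the norm identity \eqref{ittt}; once it is available the equivalences are bookkeeping. Since $\|1\|_p=1$, putting $f=1$ in \eqref{interplay} gives $W_{(u,\psi)}^n1=u_n$, so $\|u_n\|_p=\|W_{(u,\psi)}^n1\|_p\le\|W_{(u,\psi)}^n\|$, which already proves (i)$\,\Rightarrow\,$(ii). On the other hand \eqref{ittt} will come out together with the equalities $\|W_{(u,\psi)}^n\|=\|u_n\|_p=\big(|u(0)|e^{|b|^2/2}\big)^n$, and then each of the sequences $(\|W_{(u,\psi)}^n\|)_n$ and $(\|u_n\|_p)_n$ is bounded exactly when $|u(0)|e^{|b|^2/2}\le1$, i.e. exactly when (iii) holds, in which case $\sup_n\|W_{(u,\psi)}^n\|\le1$. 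So the content of the theorem is \eqref{ittt}.

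First I would pin down $u$. Boundedness of $W_{(u,\psi)}$ on $\mathcal{F}_p$ forces the standard pointwise bound $\sup_z|u(z)|e^{(|\psi(z)|^2-|z|^2)/2}<\infty$; since $|\psi(z)|^2-|z|^2=2\,\mathrm{Re}(a\overline bz)+|b|^2$ when $|a|=1$, the entire function $u(z)e^{a\overline bz}$ is bounded, hence constant, so $u(z)=u(0)e^{-a\overline bz}$. Writing $\kappa:=-a\overline b$ and $A_n:=\sum_{j=0}^{n-1}a^j$, and using $\psi^j(z)=a^jz+bA_j$, the formula \eqref{interplay} gives
\begin{align*}
W_{(u,\psi)}^n=W_{(u_n,\psi^n)},\qquad \psi^n(z)=a^nz+bA_n,\qquad u_n(z)=u(0)^n\,e^{\kappa B_n}\,e^{\kappa A_nz},
\end{align*}
where $B_n:=b\sum_{j=0}^{n-1}A_j$. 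As $|a|=1$ gives $\overline a=1/a$, one checks $a^n\overline{A_n}=aA_n$, i.e. $\kappa A_n=-a^n\overline{bA_n}$; thus $u_n(z)=u_n(0)\,e^{-a^n\overline{bA_n}\,z}$ has the admissible form attached to the affine symbol $\psi^n$.

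Next I would record the norm of such a model operator: if $\phi(z)=\alpha z+\beta$ with $|\alpha|=1$ and $v(z)=v(0)e^{-\alpha\overline\beta z}$, then $\|W_{(v,\phi)}f\|_p=|v(0)|e^{|\beta|^2/2}\|f\|_p$ for every $f$, so $\|W_{(v,\phi)}\|=|v(0)|e^{|\beta|^2/2}$. This follows from the pointwise identity $-\mathrm{Re}(\alpha\overline\beta z)-\tfrac12|z|^2=-\tfrac12|\phi(z)|^2+\tfrac12|\beta|^2$ together with the substitution $w=\phi(z)$, which is measure preserving because $|\alpha|=1$: for $1\le p<\infty$ it turns $\|W_{(v,\phi)}f\|_p^p$ into $\big(|v(0)|e^{|\beta|^2/2}\big)^p\|f\|_p^p$, and the same identity applied to the supremum (and the fact that $\phi$ maps $\CC$ onto $\CC$) handles $p=\infty$. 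Applying this with $\phi=\psi^n$, $v=u_n$ and $\beta=bA_n$ yields
\begin{align*}
\|W_{(u,\psi)}^n\|=\|u_n\|_p=|u_n(0)|\,e^{|b|^2|A_n|^2/2}=|u(0)|^n\exp\!\Big(\mathrm{Re}(\kappa B_n)+\tfrac12|b|^2|A_n|^2\Big).
\end{align*}

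It remains to verify that $\mathrm{Re}(\kappa B_n)+\tfrac12|b|^2|A_n|^2=\tfrac12 n|b|^2$. Since $\kappa B_n=-|b|^2\,a\sum_{j=0}^{n-1}A_j$, this is the scalar identity $|A_n|^2-2\,\mathrm{Re}\big(a\sum_{j=0}^{n-1}A_j\big)=n$. Expanding $|A_n|^2=\sum_{j,k=0}^{n-1}a^{j-k}$ and grouping by $m=j-k$ gives $|A_n|^2=n+\sum_{m=1}^{n-1}(n-m)(a^m+a^{-m})$, while reindexing the double sum gives $a\sum_{j=0}^{n-1}A_j=\sum_{m=1}^{n-1}(n-m)a^m$, hence $2\,\mathrm{Re}\big(a\sum_{j=0}^{n-1}A_j\big)=\sum_{m=1}^{n-1}(n-m)(a^m+a^{-m})$, and the two sums cancel to leave $n$; only $\overline a=1/a$ is used, so $a=1$ needs no separate argument. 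I expect this last cancellation — and, upstream of it, carrying the correct sign in $u(z)=u(0)e^{-a\overline bz}$, since the opposite sign would manufacture a spurious factor growing like $e^{\mathrm{const}\cdot n^2}$ — to be the only points requiring care; the rest is structural.
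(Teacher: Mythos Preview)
Your proof is correct and takes a genuinely different route from the paper's. The paper splits into the cases $a=1$ and $a\neq1$ throughout: for (ii)$\Rightarrow$(iii) it uses pointwise estimates at carefully chosen points (at $z=-nb$ when $a=1$, at the fixed point $b/(1-a)$ when $a\neq1$), and for (iii)$\Rightarrow$(i) it invokes the formula $\|W_{(u_n,\psi^n)}\|=M(u_n,\psi^n)$ and computes the supremum directly, again case by case; in the $a\neq1$ case the paper in fact only records an \emph{upper bound} for $\|W_{(u,\psi)}^n\|$ of the form $C\big(|u(0)|e^{|b|^2/2}\big)^n$ and does not simplify the exact expression to \eqref{ittt}. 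Your approach instead observes once and for all that whenever $|\alpha|=1$ and $v(z)=v(0)e^{-\alpha\overline\beta z}$, the operator $W_{(v,\alpha z+\beta)}$ is a constant multiple of an isometry on every $\mathcal{F}_p$, which immediately gives both $\|W_{(u,\psi)}^n\|$ and $\|u_n\|_p$ in closed form and collapses all three equivalences to the single scalar identity $|A_n|^2-2\,\mathrm{Re}\big(a\sum_{j=0}^{n-1}A_j\big)=n$. This buys you a unified argument with no case split on $a$, and the exact norm formula \eqref{ittt} for all $|a|=1$ rather than just the inequality needed for power boundedness. The paper's approach, by contrast, keeps closer to the general machinery (the $M(u,\psi)$ quantity) and so transfers more directly to the $|a|<1$ setting treated later.
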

        It is interesting that we have  an easy to apply  equivalent conditions  for the
power boundedness of the weighted composition operators. Part (iii) of the condition is also    independent of  underlying space or the exponents $p$. In addition, by setting $n= 1$, the  theorem    provides  a simple expression for the norm of the  operators namely that
              \begin{align*}
              \|W_{(u,\psi)}\|=|u(0)| e^{\frac{|b|^2}{2}}.
              \end{align*}
We recall that a bounded linear operator is a contraction when its norm is bounded by $1$.  In view of this, we may add one more equivalent  condition to the above list in the theorem, namely that  $W_{(u,\psi)}$ is power  bounded on  $\mathcal{F}_p$ if and only if it is a contraction.    It should be also  noted that  for the case $ a\neq 1$ and  $|a|=1$, the above conditions are also  equivalent to  $\big|u\big(\frac{b}{1-a}\big)\big |\leq 1$ since an application of  Lemma~\ref{lem0} below  implies
\begin{align*}
\Big|u\Big(\frac{b}{1-a}\Big)\Big|=  |u(0)|\Big|K_{-\overline{a}b} \Big(\frac{b}{1-a}\Big)\Big|=  |u(0)| \Big|e^{-\frac{a |b|^2}{1-a}}\Big| \quad \quad \quad \quad \quad \\
=  |u(0)|e^{- |b|^2 \Re(\frac{a}{1-a})}=  |u(0)| e^{  \frac{|b|^2}{2}},
\end{align*} where $\Re$ denotes the real part of the given  complex number.   This inspires us  to ask whether a similar condition works for the remaining case namely that  when $|a|<1$. In this  case, as will be explained later, the powers of weighted composition operators are again weighted composition operators. This together with the relations in \eqref{norm}  and \eqref{norm2} ensure that the following necessary and sufficient conditions hold whenever $|a|<1$.
\begin{proposition}\label{prop}
Let $1\leq p\leq \infty, \ u,  \psi \in \mathcal{H}(\CC)$  and $W_{(u,\psi)}$ be bounded on  $\mathcal{F}_p$. Let  $\psi(z)= az+b$ and $|a|< 1$.
\begin{enumerate}
\item If $W_{(u,\psi)}$ is power  bounded on  $\mathcal{F}_p$, then
 \begin{align}
 \label{necc}
 \Big|u\Big(\frac{b}{1-a}\Big)\Big |\leq 1
 \end{align}
\item $W_{(u,\psi)}$ is power  bounded on  $\mathcal{F}_p$,  $p<\infty$   if
\begin{align}
\label{suff}
 \Big|u\Big(\frac{b}{1-a}\Big)\Big |\leq |a|^{\frac{2}{p}}
 \end{align}
\end{enumerate}
\end{proposition}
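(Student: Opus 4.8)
The plan is to obtain both statements from the norm relations \eqref{norm}--\eqref{norm2} (a lower and an upper norm bound for weighted composition operators with $|a|<1$), the iteration formula \eqref{interplay}, and the fact that the point $z_0:=\tfrac{b}{1-a}$ (well defined since $|a|<1$ makes $a^n\neq1$) is fixed by every iterate $\psi^j$. First I would note that by \eqref{interplay} the powers are again weighted composition operators, $W_{(u,\psi)}^n=W_{(u_n,\psi^n)}$ with $\psi^n(z)=a^n z+b_n$, $b_n=b\tfrac{1-a^n}{1-a}$; that $\psi^n$ also fixes $z_0$; and that, evaluating the product in \eqref{interplay} at the fixed point, $u_n(z_0)=\prod_{j=0}^{n-1}u(\psi^j(z_0))=u(z_0)^n$.

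For (i) this is enough: applying the lower bound in \eqref{norm} to $W_{(u_n,\psi^n)}$ and evaluating the associated multiplier at $w=z_0$ gives $\|W_{(u,\psi)}^n\|\gtrsim|u_n(z_0)|\,e^{(|\psi^n(z_0)|^2-|z_0|^2)/2}=|u(z_0)|^n$ (equivalently, apply $W_{(u,\psi)}^n$ to the unit--norm kernel $k_{z_0}$ and invoke the pointwise estimate \eqref{global} at $z_0$). If $(\|W_{(u,\psi)}^n\|)_n$ is bounded, then $(|u(z_0)|^n)_n$ is bounded, forcing $|u(z_0)|\le1$, which is \eqref{necc}.

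For (ii) I would feed $W_{(u_n,\psi^n)}$ into the upper bound \eqref{norm2}, whose contraction constant is now $a^n$ with $|a^n|<1$; this is where the restriction $p<\infty$ and the factor $|a^n|^{-2/p}=|a|^{-2n/p}$ enter, and boundedness of $(\|W_{(u,\psi)}^n\|)_n$ is reduced to
\[ \sup_{w\in\CC}G_n(w)\ \le\ C\,|u(z_0)|^n,\qquad G_n(w):=|u_n(w)|\,e^{(|\psi^n(w)|^2-|w|^2)/2}, \]
with $C$ independent of $n$; granting this and the hypothesis $|u(z_0)|\le|a|^{2/p}$ gives $\|W_{(u,\psi)}^n\|\lesssim|a|^{-2n/p}\cdot C|a|^{2n/p}=C'$, the point of \eqref{suff} being precisely that the gain $|a|^{2n/p}$ cancels the loss $|a|^{-2n/p}$. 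To reach the displayed bound I would use the telescoping identity $G_n(w)=\prod_{j=0}^{n-1}G(\psi^j(w))$, where $G(w)=|u(w)|e^{(|\psi(w)|^2-|w|^2)/2}$ is the multiplier of $W_{(u,\psi)}$ itself, so $\sup_{\CC}G<\infty$ by \eqref{norm} and the standing boundedness; and then translate the fixed point to the origin, setting $w=z_0+\eta$ and $v(\eta):=u(z_0+\eta)\,e^{-\overline{z_0}(1-a)\eta}$, an entire function with $v(0)=u(z_0)$. A short computation gives $G(z_0+\eta)=|v(\eta)|\,e^{-(1-|a|^2)|\eta|^2/2}$, and, since $\psi^j(z_0+\eta)=z_0+a^j\eta$,
\[ G_n(z_0+\eta)=\Big(\textstyle\prod_{j=0}^{n-1}|v(a^j\eta)|\Big)\,e^{-(1-|a|^{2n})|\eta|^2/2}. \]
Thus everything reduces to the following estimate, uniform in $n$ and $\eta$: if $v$ is entire with $\sup_\eta|v(\eta)|e^{-(1-|a|^2)|\eta|^2/2}<\infty$, then $\prod_{j=0}^{n-1}|v(a^j\eta)|\le C\,|v(0)|^n\,e^{(1-|a|^{2n})|\eta|^2/2}$.

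This last estimate is the crux. Taking logarithms and subtracting the telescoped Gaussian, it reads $\sum_{j=0}^{n-1}\big(\log|v(a^j\eta)|-\log|v(0)|\big)\le\log C+\tfrac{1-|a|^{2n}}{2}|\eta|^2$, and I would establish it from the Hadamard factorization $v=e^{c_2\eta^2+c_1\eta+c_0}\,\Pi$ with $\Pi$ the canonical product of genus at most $1$ over the zeros $\{w_k\}$ of $v$: the growth hypothesis forces $|c_2|\le\tfrac{1-|a|^2}{2}$, after which the quadratic and linear pieces sum geometrically and are controlled using the identity $1-|a|^{2n}=(1-|a|^2)\sum_{j<n}|a|^{2j}$ together with $\big|\sum_{j<n}a^{2j}\big|\le\sum_{j<n}|a|^{2j}$, while the orbit $a^j\eta\to z_0$ keeps $\sum_{j<n}\log|\Pi(a^j\eta)|$ under control via subharmonicity of $\log|\Pi|$ and $\sum_k|w_k|^{-2}<\infty$. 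The main obstacle I foresee is precisely the uniformity in $\eta$: for large $\eta$ there are on the order of $\log|\eta|$ indices $j$ with $|a^j\eta|$ large, and any term-by-term majorization of these factors both destroys the cancellation present in $\sum_j\log|v(a^j\eta)|$ and overspends the quadratic budget, so one must genuinely exploit the sharp value $|c_2|\le\tfrac{1-|a|^2}{2}$ and the geometric--series identity, and control the zero-counting term through its smallness near the attracting fixed point rather than globally. Finally, the degenerate case $u(z_0)=0$ (in which \eqref{suff} holds trivially) needs no work: then $v$ vanishes at $0$, say to order $m$, so $\prod_{j<n}v(a^j\eta)$ acquires a factor $a^{m\binom n2}$ whose modulus $|a|^{mn(n-1)/2}$ dominates $|\eta|^{mn}$, the bounded tail, and $|a|^{-2n/p}$, whence $\|W_{(u,\psi)}^n\|\to0$; and if $u\equiv0$ then $W_{(u,\psi)}=0$.
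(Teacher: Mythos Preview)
Your argument for part~(i) is correct and is exactly what the paper has in mind: the paper states only that the proposition ``follows from the relations \eqref{norm}--\eqref{norm2}'' together with $W_{(u,\psi)}^n=W_{(u_n,\psi^n)}$, and indeed the lower bound $\|W_{(u,\psi)}^n\|\ge M(u_n,\psi^n)\ge |u_n(z_0)|=|u(z_0)|^n$ gives \eqref{necc} immediately. This is also the content of Lemma~\ref{lem2}(i).

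For part~(ii) there is a genuine gap. The paper itself offers no argument beyond the one-line remark preceding the statement, so there is nothing concrete to compare against; you have correctly isolated that the upper bound in \eqref{norm} reduces matters to the estimate $M(u_n,\psi^n)\le C\,|u(z_0)|^n$, which the paper does not justify. Your telescoping identity $G_n=\prod_{j<n}G\circ\psi^j$ and the translation to $v(\eta)=u(z_0+\eta)e^{-\overline{z_0}(1-a)\eta}$ are both correct and useful. The problem is the ``crux'' step. Your Hadamard argument asserts that the canonical product of the zeros of $v$ has genus at most~$1$ and that the quadratic coefficient satisfies $|c_2|\le\tfrac{1-|a|^2}{2}$; neither follows from the growth bound $|v(\eta)|\le M\,e^{(1-|a|^2)|\eta|^2/2}$ alone. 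A function of order~$2$ and finite type can have exponent of convergence equal to~$2$, so $\sum_k|w_k|^{-2}$ may diverge and the canonical product has genus~$2$; moreover, when $v$ has zeros the canonical product itself contributes to the order-$2$ growth, so one cannot read off $|c_2|\le\tfrac{1-|a|^2}{2}$ from the modulus bound. The appeal to ``subharmonicity of $\log|\Pi|$ and $\sum_k|w_k|^{-2}<\infty$'' is therefore unsupported. Finally, your treatment of the degenerate case $u(z_0)=0$ is circular: writing $v(\eta)=\eta^m\tilde v(\eta)$ and invoking the same estimate for $\tilde v$ would make the Gaussian factors cancel exactly, leaving $\sup_\eta G_n(z_0+\eta)\lesssim |a|^{m\binom{n}{2}}|\tilde v(0)|^n\sup_\eta|\eta|^{mn}=\infty$, so the argument as written does not close. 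In short, the reduction is right but the promised control of $\prod_{j<n}|v(a^j\eta)|$ is not established.
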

When $p\to \infty$, the right-hand side  in \eqref{suff} tends to  1. Thus, the condition in  \eqref{necc} is both necessary and sufficient for  $W_{(u,\psi)}$ to be power bounded  on the space  $\mathcal{F}_\infty$.   In particular when $W_{(u,\psi)}$ is   compact, we record our next main result which holds true on all the spaces  $\mathcal{F}_p$.

          \begin{theorem}\label{thm2}
          Let $1\leq p\leq \infty, \ u,  \psi \in \mathcal{H}(\CC)$  and $W_{(u,\psi)}$ be   bounded  on  $\mathcal{F}_p$, and   $\psi(z)= az+b, \ \ $ with $ |a|< 1 $.   Let $u$  be  non-vanishing and $W_{(u,\psi)}$ be  compact. Then the following  statements are equivalent.
              \begin{enumerate}
            \item $W_{(u,\psi)}$ is power  bounded on  $\mathcal{F}_p$;
         \item $(\|u_n\|_p)_{n}$ is a bounded sequence;
         \item $|u\big(\frac{b}{1-a}\big)|\leq 1 $.
             \end{enumerate}

 \end{theorem}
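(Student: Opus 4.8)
The plan is to establish the cycle $(i)\Rightarrow(ii)\Rightarrow(iii)\Rightarrow(i)$, using throughout that for $|a|<1$ the map $\psi$ has the unique fixed point $\gamma=\frac{b}{1-a}$, that $\psi^n(z)=a^n(z-\gamma)+\gamma$, and that by \eqref{interplay} one has $W_{(u,\psi)}^n=W_{(u_n,\psi^n)}$ with $u_n=\prod_{j=0}^{n-1}u\circ\psi^j$. Since $\psi^j(\gamma)=\gamma$ for every $j$, I would first record the identity $u_n(\gamma)=u(\gamma)^n$, which already drives the two easy implications. For $(i)\Rightarrow(ii)$: the constant function $\mathbf 1$ lies in $\mathcal F_p$ with norm one and $W_{(u,\psi)}^n\mathbf 1=u_n$, so $\|u_n\|_p=\|W_{(u,\psi)}^n\mathbf 1\|_p\le\|W_{(u,\psi)}^n\|$ and power boundedness forces $(\|u_n\|_p)_n$ to be bounded. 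For $(ii)\Rightarrow(iii)$: evaluating \eqref{global} at $z=\gamma$ gives $\|u_n\|_p\gtrsim e^{-|\gamma|^2/2}|u_n(\gamma)|=e^{-|\gamma|^2/2}|u(\gamma)|^n$, so boundedness of $(\|u_n\|_p)_n$ forces $|u(\gamma)|\le1$.

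The substance is $(iii)\Rightarrow(i)$, and the first step is to pin down the shape of $u$. Because $u$ is non-vanishing and $\CC$ is simply connected, $u=e^{g}$ for some entire $g$. Boundedness of $W_{(u,\psi)}$ on $\mathcal F_p$ forces (by the boundedness criterion for weighted composition operators on $\mathcal F_p$) the quantity $h(z):=|u(z)|e^{(|\psi(z)|^2-|z|^2)/2}$ to be bounded; since $\Re g(z)=\log h(z)+\tfrac{1}{2}(1-|a|^2)|z|^2-\Re(a\bar b z)-\tfrac{|b|^2}{2}$, the real part of $g$ has at most quadratic growth, hence $g(z)=\alpha z^2+\beta z+\delta$ is a polynomial of degree at most two. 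Compactness of $W_{(u,\psi)}$ upgrades the description to $h(z)\to0$ as $|z|\to\infty$; inspecting $\log h(z)=\Re(\alpha z^2)-\tfrac{1}{2}(1-|a|^2)|z|^2+O(|z|)$ along the ray where $\Re(\alpha z^2)=|\alpha||z|^2$ then forces the \emph{strict} inequality $|\alpha|<\tfrac{1}{2}(1-|a|^2)$. Summing the iterates gives $u_n=e^{g_n}$ with $g_n(z)=\alpha_n(z-\gamma)^2+\beta_n(z-\gamma)+n\,g(\gamma)$, where $\alpha_n=\alpha\sum_{j=0}^{n-1}a^{2j}$ obeys $|\alpha_n|\le\theta:=\frac{|\alpha|}{1-|a|^2}<\tfrac{1}{2}$ and $(\beta_n)_n$ is bounded; in particular $|u_n(z)|=|u(\gamma)|^{n}e^{\Re(\alpha_n(z-\gamma)^2+\beta_n(z-\gamma))}$.

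Now assume $|u(\gamma)|\le1$ and estimate $\|W_{(u,\psi)}^nf\|_p$ directly. From \eqref{global}, $|f(\psi^n(z))|\lesssim e^{|\psi^n(z)|^2/2}\|f\|_p$, so $\|W_{(u,\psi)}^nf\|_p^p\le\|f\|_p^p\int_\CC h_n(z)^p\,dA(z)$, where $h_n(z)=\prod_{j=0}^{n-1}h(\psi^j(z))=|u(\gamma)|^{n}\exp\!\big(\Re(\alpha_n w^2)-\tfrac{1}{2}(1-|a|^{2n})|w|^2+\Re(\beta_n w)+\tfrac{1}{2}L_n(w)\big)$ with $w=z-\gamma$ and $L_n$ real-linear, $|L_n(w)|\le 4|\gamma|\,|w|$. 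Bounding $\Re(\alpha_n w^2)\le\theta|w|^2$, the Gaussian exponent has quadratic coefficient $\le\theta-\tfrac{1}{2}(1-|a|^{2n})$, which tends to $\theta-\tfrac{1}{2}<0$ as $n\to\infty$. Hence there is $N_0$, depending only on $\theta$ and $|a|$, such that for all $n\ge N_0$ this coefficient is $\le-c_0<0$; since the linear coefficient stays bounded (because $(\beta_n)_n$ and $|\gamma|$ do) and the prefactor satisfies $|u(\gamma)|^{np}\le1$, the resulting Gaussian integral $\int_\CC h_n^p\,dA$ is bounded by a constant independent of $n\ge N_0$, giving $\sup_{n\ge N_0}\|W_{(u,\psi)}^n\|<\infty$. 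As each of the finitely many norms $\|W_{(u,\psi)}^n\|$ with $n<N_0$ is finite, $W_{(u,\psi)}$ is power bounded. For $p=\infty$ the same argument applies with $\int_\CC h_n^p\,dA$ replaced by $\sup_\CC h_n$.

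I expect the main obstacle to be the rigidity step: passing from the soft information ``$h$ bounded and $h\to0$ at infinity'' to the rigid conclusion $u=e^{\alpha z^2+\beta z+\delta}$ with $|\alpha|<\tfrac{1}{2}(1-|a|^2)$. It is precisely this \emph{strict} inequality --- a genuine consequence of compactness rather than of mere boundedness --- that yields $\theta<\tfrac{1}{2}$ and makes the $n$-uniform Gaussian estimate go through; with only $|\alpha|\le\tfrac{1}{2}(1-|a|^2)$ the borderline coefficient $\theta-\tfrac{1}{2}(1-|a|^{2n})$ could fail to be negative, consistently with the fact that Theorem~\ref{thm1} (the case $|a|=1$, where no such decay is available) has a genuinely more restrictive conclusion. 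The only remaining bookkeeping point is the harmless separation of the finitely many small $n$ for which $|a|^{2n}$ is not yet negligible.
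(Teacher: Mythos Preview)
Your proof is correct. The implications $(i)\Rightarrow(ii)\Rightarrow(iii)$ match the paper's exactly (Lemma~\ref{lem2} plus evaluation of \eqref{global} at the fixed point). For $(iii)\Rightarrow(i)$ your argument is close in spirit but differs from the paper's in two places worth flagging.

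First, the paper does not bound $|\alpha_n|$ by $\theta=|\alpha|/(1-|a|^2)$ as you do; instead it invokes the elementary inequality $\frac{|1-a^{2n}|}{1-|a|^{2n}}\le\frac{|1-a^{2}|}{1-|a|^{2}}$ (Lemma~\ref{techlemma}) to obtain the sharper estimate $|\alpha_n|<\tfrac12(1-|a|^{2n})$ valid for \emph{every} $n$. This makes the quadratic coefficient negative for all $n$ at once and removes the need for your ``finitely many small $n$'' patch. Your cruder route is perfectly valid, but the paper's lemma buys uniformity from the start.

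Second, for $p<\infty$ the paper does \emph{not} use the global point estimate \eqref{global} on $f$ as you do. It instead applies the local subharmonicity estimate \eqref{pointwise}, interchanges integrals by Fubini, and converts the characteristic function of $D(\psi^n(z),1)$ into one over a disc in the $z$-variable; the resulting inner integral is then bounded using the same Gaussian analysis. Your approach---bounding $|f(\psi^n(z))|$ by $e^{|\psi^n(z)|^2/2}\|f\|_p$ and reducing directly to $\int_{\CC} h_n^p\,dA$---is more elementary and shorter, and it succeeds precisely because the strict inequality $|\alpha|<\tfrac12(1-|a|^2)$ guarantees that $h_n^p$ is a genuine Gaussian (for $n\ge N_0$ in your version, for all $n$ in the paper's). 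It also handles the case $a=0$ without the separate argument the paper gives. The trade-off is only that your estimate loses a factor tied to the Gaussian normalization, which is harmless for power boundedness.
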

As in Theorem~\ref{thm1}, part (iii) of the condition  is  simple to apply and independent of the exponents $p$.  Observe that from the two theorems above, it is easy to see that a bounded composition
  operator $C_\psi$ is always power bounded while the multiplication  operator $M_u$ is not in general; see Corollary~\ref{corM}.

    To prove the results, we need to make some preparations.
The bounded and compact weighted composition operators on Fock spaces were characterized first in terms of Berezin-type integral transforms in  \cite{TM4,TM0,UK}. Later,  Le \cite{Tle} considered the Hilbert space $\mathcal{F}_2$ setting and obtained a simpler condition namely  that  $W_{(u,\psi)}$ is bounded on $\mathcal{F}_2$ if and only if
\begin{align}
 \label{bounded}
 M(u, \psi):=\sup_{z\in \CC} |u(z)|e^{\frac{1}{2}(|\psi(z)|^2-|z|^2)} <\infty.
 \end{align} He further proved that
 \eqref{bounded} implies $
 \psi(z)= az+b$ with $ |a|\leq 1$.  In \cite{TMMW}, T. Mengestie and M. Worku
 proved that the Berezin-type integral condition used to describe the boundedness of generalized Volterra-type integral operators $V_{(g,\psi)}$ on the Fock spaces  $\mathcal{F}_p$     is equivalent to  a simple condition as in  \eqref{bounded}. Because of the Littlewood-Paley type description of the Fock  spaces, by  simply  replacing $|g'(z)|/(1+|z|)$ by $|u(z)|$ in the results there, it has been known   that \eqref{bounded}  in fact describes
 the bounded weighted composition operators on  all the spaces  $\mathcal{F}_p, \ 1\leq p< \infty, $  with norm bounds
\begin{align}
 \label{norm}
  M(u, \psi)\leq \|W_{(u,\psi)}\|\leq
  |a|^{-\frac{2}{p}}  M(u, \psi).
 \end{align} For $p= \infty$, the corresponding  relation  holds with equality,
 \begin{align}
 \label{norm2}
 \|W_{(u,\psi)}\| =  M(u, \psi).
 \end{align}
   As indicated in \cite{Tle}, an  interesting consequence of \eqref{bounded} is that  if $|a|=1$, then a simple argument with Liouville's theorem gives  that the weight function  $u $  has the form $ u(z)= u(0)K_{-\overline{a}b}(z). $  This representation of  $u$ will play an important roll in the rest of the paper. Thus, we may  formulate it as  a lemma for the purpose of  easy further   referencing.
 \begin{lemma}
 \label{lem0}
 Let $1\leq p\leq \infty, \ u, \  \psi \in \mathcal{H}(\CC)$ and $W_{(u,\psi)}$ be bounded on $\mathcal{F}_p,$ and hence $\psi(z)= az+b, \  |a|\leq1$.  If $|a|=1$, then
 \begin{align*}
  u(z)= u(0)K_{-\overline{a}b}(z).
 \end{align*}
 \end{lemma}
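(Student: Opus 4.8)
The plan is to read off everything from the boundedness criterion \eqref{bounded}. Since $W_{(u,\psi)}$ is assumed bounded on $\mathcal{F}_p$, the quantity $M(u,\psi)=\sup_{z\in\CC}|u(z)|e^{\frac12(|\psi(z)|^2-|z|^2)}$ is finite, which rephrases as the pointwise bound
\[
|u(z)|\le M(u,\psi)\, e^{\frac12(|z|^2-|\psi(z)|^2)}\qquad (z\in\CC).
\]
First I would substitute $\psi(z)=az+b$ with $|a|=1$ and expand $|\psi(z)|^2=|az+b|^2=|z|^2+2\Re(a\overline b\,z)+|b|^2$, so that $|z|^2-|\psi(z)|^2=-2\Re(a\overline b\,z)-|b|^2$. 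Plugging this in gives
\[
|u(z)|\le M(u,\psi)\, e^{-|b|^2/2}\, e^{-\Re(a\overline b\,z)}\qquad (z\in\CC).
\]

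The next step is to remove the exponential growth by multiplying $u$ by a non-vanishing entire compensator. Consider $g(z):=u(z)\,e^{a\overline b\,z}$, which is entire. Since $|e^{a\overline b\,z}|=e^{\Re(a\overline b\,z)}$, the previous estimate yields $|g(z)|\le M(u,\psi)\,e^{-|b|^2/2}$ for every $z$, so $g$ is a bounded entire function. By Liouville's theorem $g$ is constant, and evaluating at $z=0$ shows $g\equiv g(0)=u(0)$. Hence $u(z)=u(0)\,e^{-a\overline b\,z}$ for all $z\in\CC$. Finally, recalling $K_w(z)=e^{\overline w z}$, the choice $w=-\overline a b$ gives $K_{-\overline a b}(z)=e^{\,\overline{(-\overline a b)}\,z}=e^{-a\overline b\,z}$, so $u(z)=u(0)\,K_{-\overline a b}(z)$, which is the asserted representation. (When $b=0$ this reads $u\equiv u(0)$ and $K_0\equiv 1$, which is consistent.)

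This argument is essentially routine, so no serious obstacle is expected; the only points requiring a little care are (a) invoking the boundedness characterization in the correct generality — namely that \eqref{bounded} describes the bounded $W_{(u,\psi)}$ on all $\mathcal{F}_p$, $1\le p\le\infty$, as recorded through \eqref{norm} and \eqref{norm2} — and (b) matching the exponential factor $e^{a\overline b z}$ used to cancel the growth of $u$ with the reproducing-kernel convention $K_w(z)=e^{\overline w z}$, so that the conjugations in the exponent land correctly.
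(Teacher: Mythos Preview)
Your proof is correct and follows exactly the approach the paper indicates: the paper does not spell out a proof of Lemma~\ref{lem0} but simply remarks that it is ``a simple argument with Liouville's theorem'' based on \eqref{bounded}, and your write-up is precisely that argument carried out in full detail with the conjugations handled correctly.
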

 It should be noted that  by condition \eqref{bounded} it is possible for $W_{(u,\psi)}= M_u C_\psi$ to be bounded   even if both the factors  $C_\psi$ and $M_u$ are  unbounded. The functions  $u(z)= z$ and $\psi(z)= z+1$ provides such an example.

 Similarly, compactness of $W_{(u,\psi)}$ has been described by the fact that $\psi(z)= az+b, |a|\leq 1$ and  $|u(z)|e^{\frac{1}{2}(|\psi(z)|^2-|z|^2)}\to 0$ as $|z|\to \infty$.  The later condition  implies that $|a|<1$ but not conversely. Very recently, Carroll and Gilmore \cite{CG}, used the idea of order of analytic function  and proved the following analogues result.
 \begin{lemma}\label{newlem}
  Let $1\leq p\leq \infty, \ u, \  \psi \in \mathcal{H}(\CC)$ and  $\psi(z)= az+b, \  |a|<1$, and assume that $u$ is non-vanishing. Then  $W_{(u,\psi)}$ is
   compact on $\mathcal{F}_p$ if and only if $ u$ has the form
  \begin{align}
  \label{neww}
  u(z)= e^{a_0+a_1z+ a_2z^2}
    \end{align} for some constants $a_0, a_1, a_2$ such that $|a_2|<\frac{1-|a|^2}{2}$.
 \end{lemma}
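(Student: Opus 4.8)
The plan is to reduce the statement to the compactness criterion recalled just above, namely that $W_{(u,\psi)}$ is compact on $\mathcal{F}_p$ exactly when $\psi(z)=az+b$ with $|a|\le 1$ and $m(z):=|u(z)|e^{\frac12(|\psi(z)|^2-|z|^2)}\to 0$ as $|z|\to\infty$. Expanding $|\psi(z)|^2=|a|^2|z|^2+2\Re(a\overline{b}z)+|b|^2$ gives the elementary identity
\[
\tfrac12\big(|\psi(z)|^2-|z|^2\big)=-\tfrac{1-|a|^2}{2}|z|^2+\Re(a\overline{b}z)+\tfrac{|b|^2}{2}.
\]
Since $u$ is non-vanishing on the simply connected domain $\CC$, I would write $u=e^{g}$ with $g\in\mathcal{H}(\CC)$, so that $|u(z)|=e^{\Re g(z)}$ and the compactness condition turns into
\[
E(z):=\Re g(z)-\tfrac{1-|a|^2}{2}|z|^2+\Re(a\overline{b}z)+\tfrac{|b|^2}{2}\longrightarrow-\infty\qquad(|z|\to\infty).
\]

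For the forward implication, observe first that $E\to-\infty$ forces $m$ to be bounded, whence $|u(z)|\le\|m\|_\infty\,e^{\frac{1-|a|^2}{2}|z|^2+|a||b||z|}$; in particular $u$ is entire of order at most $2$. Being non-vanishing, Hadamard's factorization theorem then yields $u(z)=e^{a_0+a_1z+a_2z^2}$ for suitable constants $a_0,a_1,a_2$ (alternatively, the one-sided bound $\Re g(z)\lesssim|z|^2$ together with the Borel--Carath\'eodory inequality and Cauchy's estimates forces all Taylor coefficients of $g$ of order $\ge 3$ to vanish). To pin down $a_2$, write $a_2=|a_2|e^{i\varphi}$ and evaluate $E$ along the two opposite rays $z=\pm re^{-i\varphi/2}$, on which $\Re(a_2z^2)=|a_2|r^2$ is as large as possible; adding the two resulting expressions the linear terms cancel and one is left with $2\big(|a_2|-\tfrac{1-|a|^2}{2}\big)r^2+\big(2\Re a_0+|b|^2\big)$, which can tend to $-\infty$ only if $|a_2|<\tfrac{1-|a|^2}{2}$.

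The converse is a direct estimate: if $u(z)=e^{a_0+a_1z+a_2z^2}$ with $|a_2|<\tfrac{1-|a|^2}{2}$, then
\[
E(z)\le\Re a_0+\tfrac{|b|^2}{2}+\big(|a_1|+|a||b|\big)|z|-\Big(\tfrac{1-|a|^2}{2}-|a_2|\Big)|z|^2\longrightarrow-\infty,
\]
so $m(z)\to 0$; since in addition $\psi$ is affine with $|a|<1$, the compactness criterion shows that $W_{(u,\psi)}$ is compact on $\mathcal{F}_p$ for every $1\le p\le\infty$.

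I expect the only genuinely delicate points to be: (i) passing rigorously from the one-sided growth bound on $\Re g$ to the conclusion that $u=e^{\text{quadratic}}$ — this is exactly where the non-vanishing hypothesis and the order/Hadamard theory (or Borel--Carath\'eodory) must be invoked with care — and (ii) extracting the \emph{strict} inequality $|a_2|<\tfrac{1-|a|^2}{2}$ rather than merely $\le$; the symmetrization over the two worst rays above is the cleanest device I know for that. The remainder is bookkeeping with the elementary identity for $|\psi(z)|^2$.
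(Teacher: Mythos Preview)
Your argument is correct. Note, however, that the paper does not actually prove this lemma: it is quoted from Carroll and Gilmore \cite{CG}, and the paper only remarks that their proof ``used the idea of order of analytic function''. Your approach is in the same spirit --- controlling $\Re g$ by the compactness criterion, then invoking Hadamard factorization (or, equivalently, Borel--Carath\'eodory plus Cauchy estimates) to force $g$ to be quadratic --- so it recovers exactly what \cite{CG} does. The symmetrization device along the rays $z=\pm r e^{-i\varphi/2}$ to extract the \emph{strict} inequality $|a_2|<\tfrac{1-|a|^2}{2}$ is clean and fully rigorous; the converse is indeed routine.
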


Next, we  consider the  following key  necessary  conditions for power bounded  $ W_{(u,\psi)}$. The lemma gives  a good restriction on the growth of the sequence $(\|u_n\|_p)_n $ and  the value  $|u(z_0)|$ where $z_0$ is a fixed point of $\psi$.
        \begin{lemma}
        \label{lem2}
 Let $1\leq p\leq \infty$ and $ \ u, \psi \in \mathcal{H}(\CC)$. If  $W_{(u,\psi)}$ is
  power  bounded on  $\mathcal{F}_p$, then
 \begin{enumerate}
\item $|u(z_0)|\leq 1$  where $z_0$ is a fixed point of $\psi$:
\item $(\|u_n\|_p)_{n}$ is a bounded sequence.
\end{enumerate}
\end{lemma}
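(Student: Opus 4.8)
The plan is to test the iterates of $W_{(u,\psi)}$ on the simplest available function, the constant function $\mathbf 1$, and then to read off both conclusions from the iterate formula \eqref{interplay} together with the pointwise growth estimate \eqref{global}. Power boundedness means $M:=\sup_{n\in\NN}\|W_{(u,\psi)}^n\|<\infty$, and the whole argument will consist of combining this uniform bound with the two displays just mentioned.

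First I would record the normalization $\mathbf 1\in\mathcal F_p$ with $\|\mathbf 1\|_p=1$ for every $1\leq p\leq\infty$: for $p<\infty$ this is the elementary Gaussian computation $\frac{p}{2\pi}\int_{\CC}e^{-p|z|^2/2}\,dA(z)=1$, and for $p=\infty$ it is immediate from the definition of the norm. By \eqref{interplay} we have $W_{(u,\psi)}^n\mathbf 1=u_n$, hence
\begin{align*}
\|u_n\|_p=\|W_{(u,\psi)}^n\mathbf 1\|_p\leq \|W_{(u,\psi)}^n\|\,\|\mathbf 1\|_p\leq M
\end{align*}
for all $n\in\NN$. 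This is precisely statement (ii).

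For (i), let $z_0$ be a fixed point of $\psi$, so that $\psi^j(z_0)=z_0$ for every $j\geq 0$ by induction. Then the product defining $u_n$ in \eqref{interplay} collapses to $u_n(z_0)=\prod_{j=0}^{n-1}u(\psi^j(z_0))=u(z_0)^n$. Evaluating the pointwise estimate \eqref{global} for $u_n$ at $z_0$ and inserting the bound from (ii) gives
\begin{align*}
|u(z_0)|^n=|u_n(z_0)|\leq \Big(\frac{2\pi}{p}\Big)^{\frac1p}e^{\frac{|z_0|^2}{2}}\|u_n\|_p\leq \Big(\frac{2\pi}{p}\Big)^{\frac1p}e^{\frac{|z_0|^2}{2}}M
\end{align*}
for all $n\in\NN$, a bound independent of $n$. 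If $|u(z_0)|>1$ the left-hand side would diverge as $n\to\infty$, so we must have $|u(z_0)|\leq 1$.

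I do not anticipate a genuine obstacle here: the argument is soft and uses neither the affine form $\psi(z)=az+b$ nor any of the characterizations of boundedness or compactness — the affine form is relevant only to guarantee that a fixed point exists (namely $z_0=\frac{b}{1-a}$ when $a\neq1$), so that (i) is not vacuous. The only point requiring a moment's care is the normalization $\|\mathbf 1\|_p=1$, which is what keeps the two estimates clean; everything else is a direct concatenation of \eqref{interplay}, the operator-norm inequality, and \eqref{global}.
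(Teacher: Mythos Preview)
Your proof is correct and follows essentially the same approach as the paper: both apply the iterates to the constant function $\mathbf 1$, use $W_{(u,\psi)}^n\mathbf 1=u_n$, and combine power boundedness with the pointwise estimate \eqref{global}. The only cosmetic difference is that you prove (ii) first and then deduce (i) from it, whereas the paper establishes (i) and (ii) separately (and splits (ii) into the cases $p<\infty$ and $p=\infty$); the underlying ingredients are identical.
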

\begin{proof}
(i). Since the constant function \textbf{1} belongs to the spaces $\mathcal{F}_p$ with $\|\textbf{1}\|_p=1 $, using  the pointwise estimate in \eqref{global}
\begin{align*}
\|W^n_{(u,\psi)}\|\geq \|W^n_{(u,\psi)} \textbf{1}\|_p\gtrsim  |W^n_{(u,\psi)}\textbf{ 1}(z_0)|e^{-\frac{|z_0|^2}{2}}=  |u_n(z_0)|e^{-\frac{|z_0|^2}{2}}
=  |u(z_0)|^n e^{-\frac{|z_0|^2}{2}}
\end{align*} from which  the inequalities
\begin{align*}
\infty >\sup_{n\in\NN}\|W^n_{(u,\psi)}\|\geq  e^{-\frac{|z_0|^2}{2}} \sup_{n\in\NN} |u(z_0)|^n
\end{align*} hold  only if $|u(z_0)|\leq 1.$

 To prove (ii),  for $p= \infty $ arguing as above we have
\begin{align*}
\|W^n_{(u,\psi)}\|\geq \|W^n_{(u,\psi)} \textbf{1}\|_\infty \geq  |W^n_{(u,\psi)}\textbf{ 1}(z)|e^{-\frac{|z|^2}{2}}= |u_n(z)|e^{-\frac{|z|^2}{2}}.
\end{align*}
Taking the  supremum  with respect to  first with  $z$ and then with $n$ give the required assertion.
On the other hand, if $p<\infty$, then
\begin{align*}
\|W^n_{(u,\psi)}\|^p \geq \|W^n_{(u,\psi)} \textbf{1}\|_p^p=\frac{p}{2\pi}\int_{\CC} |u_n(z)|^pe^{-\frac{p|z|^2}{2}} dA(z)= \|u_n\|_p^p
\end{align*}  from which the conclusion follows again.
\end{proof}

The next  simple  lemma will be crucial  in the proof of Theorem~\ref{thm2}.
\begin{lemma}\label{techlemma}
 Let $a\in \CC$ and $|a|<1$. Then  for all $n\in \NN$
\begin{align}
\label{Induction}
\frac{|1-a^2|}{1-|a|^2}\geq \frac{|1-a^{2n}|}{1-|a|^{2n}}.
\end{align}
\end{lemma}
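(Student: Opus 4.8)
The plan is to strip the statement down to a fact about a single complex number. Put $w=a^{2}$; then $|w|<1$, and since $|a^{2n}|=|w|^{n}$, the inequality \eqref{Induction} is exactly
\begin{align*}
\frac{|1-w|}{1-|w|}\;\geq\;\frac{|1-w^{n}|}{1-|w|^{n}},\qquad n\in\NN .
\end{align*}
Note that both denominators are strictly positive because $|w|<1$ (in particular $w\neq 1$), so everything in sight is well defined.

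The key step is the geometric-sum factorization
\begin{align*}
1-w^{n}=(1-w)\sum_{k=0}^{n-1}w^{k},
\end{align*}
which holds for every $n\in\NN$. Taking moduli and applying the triangle inequality, then using $|w|<1$ to sum the resulting geometric series, gives
\begin{align*}
|1-w^{n}|=|1-w|\,\Big|\sum_{k=0}^{n-1}w^{k}\Big|\;\leq\;|1-w|\sum_{k=0}^{n-1}|w|^{k}=|1-w|\,\frac{1-|w|^{n}}{1-|w|}.
\end{align*}
Dividing through by $1-|w|^{n}$ yields the displayed inequality, and substituting back $w=a^{2}$ recovers \eqref{Induction}, with equality when $n=1$.

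I do not expect a genuine obstacle here: once the problem is rephrased in terms of $w=a^{2}$, the whole argument is a one-line consequence of the factorization of $1-w^{n}$ and the triangle inequality. The only points deserving a moment's care are that $|a|<1$ forces $w\neq 1$ (so the geometric-sum formula is legitimate) and that $1-|a|^{2n}\neq 0$. Despite the equation label, no induction is strictly necessary; if one prefers, the same bound follows inductively from $1-w^{n+1}=(1-w^{n})+w^{n}(1-w)$ together with the elementary mediant inequality $\frac{x_{1}+x_{2}}{y_{1}+y_{2}}\leq\max\{x_{1}/y_{1},\,x_{2}/y_{2}\}$ for positive $y_{1},y_{2}$.
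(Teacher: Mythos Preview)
Your argument is correct and is essentially the same as the paper's: both factor $1-a^{2n}=(1-a^{2})\sum_{k=0}^{n-1}a^{2k}$, apply the triangle inequality to the sum, and then use the closed form $\sum_{k=0}^{n-1}|a|^{2k}=\frac{1-|a|^{2n}}{1-|a|^{2}}$. Your substitution $w=a^{2}$ is purely cosmetic, and your care with the denominators (noting $|a|<1$ ensures $a^{2}\neq 1$ and $1-|a|^{2n}>0$) is a welcome bit of hygiene.
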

 \begin{proof}
Applying triangular inequality,
\begin{align*}
 \frac{|1-a^{2n}|}{|1-a^{2}|} =    \left|1 + a^{2} + (a^{2})^{2} + ... +(a^{2})^{n-1}\right|  \leq 1+|a^{2}|+ |(a^{2})^2|+...+ |(a^{2})^{(n-1)}|  \\
 = 1 + |a|^{2} + (|a|^{2})^{2} + ... + (|a|^{2})^{n-1} = \frac{1-|a|^{2n}}{1-|a|^2} \quad \quad \quad \quad \quad \quad \quad \quad \quad \quad
\end{align*} from which \eqref{Induction} follows.
\end{proof}
We are now ready to give the  proofs of the previous two main results.
\subsection{Proof of Theorem~\ref{thm1}} \label{pr1}
 The statement (i) implies (ii) is proved in Lemma~\ref{lem2}. On the other hand, if (ii) holds, then  using \eqref{global}

               \begin{align}
               \label{more}
               \infty > \sup_{n\in\NN_0}\|u_n\|_p \gtrsim \sup_{n\in\NN_0} |u_n(z)|e^{-\frac{|z|^2}{2}}
               \end{align} for each $z\in \CC$.
                If $a= 1$, then  $\psi^j(z)= z+jb$ and using Lemma~\ref{lem0},
\begin{align*}
u_n(z)= u(0)^n\prod_{j=0}^{n-1}K_{-b}(z+jb)=  u(0)^n e^{l_n(z)}
\end{align*}
where
\begin{align*}
l_n(z):=-\overline{b} \sum_{j=0}^{n-1}(z+jb)= -\overline{b}nz-\frac{|b|^2}{2} n(n-1).
\end{align*} It follows that
\begin{align}
\label{111}  u_n(z)=  u(0)^n e^{-\frac{|b|^2}{2} n(n-1)} K_{-nb}(z)
\end{align}
for all $z\in \CC$. Considering  \eqref{111} and  applying the  estimate in \eqref{more} at $z= -nb$
\begin{align*}
\sup_{n\in\NN_0}\|u_n\|_p \gtrsim  \sup_{n\in\NN_0} |u_n(-nb)|e^{-\frac{|nb|^2}{2}}\quad \quad \quad \quad\quad \quad \quad \quad\quad \quad\nonumber \\
 = \sup_{n\in\NN_0}\bigg|u(0)e^{\frac{|b|^2}{2}}\bigg|^{n}  e^{-|b|^2 n^2} K_{-nb}(-nb)= \sup_{n\in\NN_0}\bigg|u(0)e^{\frac{|b|^2}{2}}\bigg|^{n}
\end{align*}
 and hence the statement in (iii) follows. On the other hand, if $a\neq1$ and $|a|=1$, then set $z_0$ be  the fixed point of $\psi$ and eventually applying Lemma~\ref{lem0}
               \begin{align*}
               | u_n(z_0)|=\left|u(0)K_{-\overline{a}b}\left( \frac{b}{1-a} \right) \right|^n= \left|u(0)e^{-a\overline{b}\left( \frac{b}{1-a} \right)} \right|^n \quad \quad \quad \quad \quad \quad \\
=\left|u(0) \right|^ne^{-n\Re  \left( \frac{a|b|^{2}}{1-a} \right)}= |u(0)|^ne^{\frac{n|b|^2}{2} }
               \end{align*} and the conclusion follows after taking this in \eqref{more} again.

 It remains to prove (iii) implies (i).  First observe that for each $n\in \NN$,   the operator $W_{(u,\psi)}^n$ itself is a weighted composition operator   and
$ W_{(u,\psi)}^n= M_{u_n}C_{\psi^n}= W_{(u_n,\psi^n)} $. Then   $W_{(u,\psi)}$ is power bounded if and only if
  \begin{align*}
             \sup_{n\in \NN} \sup_{z \in \CC} |u_n(z)| e^{\frac{1}{2}\big( \left| a^n z + \frac{b(1-a^{n})}{1-a} \right|^2-|z|^2\big)}<\infty.
              \end{align*}
  Thus, for $|a|=1$, we apply \eqref{norm} and obtain the  norm
\begin{align}
\label{itself}
\|W_{(u,\psi)}^n\|= \sup_{z\in \CC} |u_n(z)| e^{\frac{1}{2}(|\psi^n(z)|^2-|z|^2)}.
\end{align}
Our next task is to simplify \eqref{itself}.  If $a= 1$, then the representation in   \eqref{111} implies
 \begin{align}
\label{it}
\| W_{(u,\psi)}^n \|= \sup_{z\in \CC}e^{\frac{1}{2}(|z+nb|^2-|z|^2)}| u(0)|^n e^{-\frac{|b|^2}{2} n(n-1)}| K_{-nb}(z)|\quad \quad \quad \ \nonumber\\
= \Big(|u(0)| e^{\frac{|b|^2}{2} }\Big)^n \sup_{z\in \CC}e^{\Re(nb\overline{z})}| K_{-nb}(z)|=\Big(|u(0)| e^{\frac{|b|^2}{2} }\Big)^n
\end{align} from which the statement  follows.

\noindent   Next, assume  $a\neq1$ and $|a|=1$. Then  $\psi^j(z)= a^jz+b\frac{1-a^j}{1-a}$. By using  Lemma~\ref{lem0} again   $u_n(z)= u(0)^ne^{h_n(z)} $
 where
 \begin{align*}
 h_n(z):=-a\overline{b} \sum_{j=0}^{n-1}\Big(a^jz+b\frac{1-a^j}{1-a}\Big)= -a\overline{b}z\frac{1-a^n}{1-a} -\frac{a|b|^2n}{1-a} + \frac{a|b|^2(1-a^n)}{(1-a)^2}.
 \end{align*}
 Thus, we have
 \begin{align*}
u_n(z)= u(0)^ne^{ -\frac{a|b|^2n}{1-a} + \frac{a|b|^2(1-a^n)}{(1-a)^2}} K_{-\overline{a}b\frac{1-\overline{a}^n}{1-\overline{a}}}(z)
 \end{align*}
 from which and \eqref{itself}
  \begin{align*}
\| W_{(u,\psi)}^n \|= \sup_{z\in \CC}e^{\frac{1}{2} \big|a^nz+b\frac{1-a^n}{1-a}\big|^2-\frac{1}{2}|z|^2} |u(0)|^n\bigg|e^{-a\overline{b}z\frac{1-a^n}{1-a}-\frac{a|b|^2n}{1-a}  +\frac{a|b|^2(1-a^n)}{(1-a)^2} }\bigg| \quad \quad \quad \quad
\end{align*}
where
\begin{align*}
 \Big|a^nz+b\frac{1-a^n}{1-a}\Big|^2-|z|^2=  |b|^2\bigg|\frac{1-a^n}{1-a}\bigg|^2 + 2\Re\Big( a^nz \overline{b}\frac{1-\overline{a}^n}{1-\overline{a}}\Big)\quad \quad
 \end{align*}
and
\begin{align*}
\bigg|e^{-a\overline{b}z\frac{1-a^n}{1-a} -\frac{a|b|^2n}{1-a} +\frac{a|b|^2(1-a^n)}{(1-a)^2} }\bigg|=e^{\Re\big(-a\overline{b}z\frac{1-a^n}{1-a}-\frac{a|b|^2n}{1-a}  +\frac{a|b|^2(1-a^n)}{(1-a)^2}\big) }.
\end{align*}
On the other hand,
\begin{align*}
a^nz\overline{b}\frac{1-\overline{a}^n}{1-\overline{a}}-a\overline{b}z\frac{1-a^n}{1-a}= z\overline{b}(a^n-1)\Big(\frac{1}{1-\overline{a}}+ \frac{a}{1-a}\Big)= 0
\end{align*} and  combining all the above
\begin{align}
\label{alwww}
\| W_{(u,\psi)}^n \|= |u(0)|^n e^{\frac{|b|^2}{2}\big|\frac{1-a^n}{1-a}\big|^2+\
 \Re\big(\frac{a|b|^2(1-a^n)}{(1-a)^2}-\frac{a|b|^2n}{1-a}\big)} \quad \quad \quad \quad  \quad \quad \quad  \quad \quad \nonumber\\
 \leq |u(0)|^n e^{\frac{|b|^2}{2}\big|\frac{2}{1-a}\big|^2+
 \Re \left(\frac{a|b|^2(1-a^n)}{(1-a)^2}\right) -\Re \left( \frac{a|b|^2n}{1-a} \right)} \quad \quad \quad \nonumber \\
 \leq |u(0)|^n e^{\frac{|b|^2}{2}\big|\frac{2}{1-a}\big|^2+
 \left|\frac{a|b|^2(1-a^n)}{(1-a)^2}\right|} e^{ -n |b|^2\Re \left( \frac{a}{1-a} \right)}\nonumber\\
 \leq  e^{\frac{|b|^2}{2}\big|\frac{2}{1-a}\big|^2+
\frac{2|b|^2}{|1-a|^2}} \Big( |u(0)|e^{ -|b|^2\Re \left( \frac{a}{1-a} \right)} \Big)^{n}\nonumber\\
= e^{\frac{|b|^2}{2}\big|\frac{2}{1-a}\big|^2+
\frac{2|b|^2}{|1-a|^2}}\Big( |u(0)|e^ {\frac{|b|^2}{2} } \Big)^{n}.
\end{align}
Thus, power boundedness follows whenever $ |u(0)|\leq e^{ -\frac{|b|^2}{2} }$.
\subsection{Proof of Theorem~\ref{thm2}} \label{pr2}
  The statement (i) implies (ii) follows from Lemma~\ref{lem2} again. Assuming (ii), we proceed to show that (iii) holds. Using \eqref{global} we  estimate
\begin{align}
\label{inff}
\infty > \sup_{n\in \NN_0} \|u_n\|_p \gtrsim \sup_{n\in \NN_0} |u_n(z_0)|e^{-\frac{|z_0|^2}{2}}.
\end{align} where $z_0= b/(1-a)$ is the fixed point of $\psi$.  Moreover, observe that
\begin{align*}|u_n(z_0)|= \prod_{j=0}^{n-1} u\big(\psi^j(z_0)\big)=   |u(z_0)|^n
\end{align*} which together with Lemma~\ref{lem0} and  \eqref{inff} gives statement (iii).

\noindent
Next, we prove (iii) implies (i). If  $ p = \infty $,     applying the relation  in \eqref{norm2} for the  weighted composition operator   $ W_{(u_n,\psi^n)} $  we get
\begin{align*}
\|W_{(u_n,\psi^n)} \|= \|W_{(u,\psi)}^n \|= \sup_{z\in \CC} |u_n(z)| e^{\frac{1}{2}\big( \left| a^n z +  z_0 b(1-a^{n}) \right|^2-|z|^2\big)}.
\end{align*} Thus, $ W_{(u_n,\psi^n)} $ is power bounded on $\mathcal{F}_\infty$  if and only if
 \begin{align}
\label{power2}
\sup_{n\in \NN}\sup_{z \in \CC} |u_n(z)| e^{\frac{1}{2}\big( \left| a^n z + z_0 b(1-a^{n})\right|^2-|z|^2\big)}<\infty.
\end{align}Therefore,
by using the assumption   $\big|u \left( \frac{b}{1-a}\right) \Big|\leq 1$, we plan to show that \eqref{power2} holds.
First we consider  Lemma~\ref{newlem} and compute
\begin{align*}
u_n(z)= \prod_{j=0}^{n-1} u(\psi^j(z))=e^{S_n(z)}
\end{align*} where
\begin{align}
\label{exp}
S_n(z)= \sum_{j=0}^{n-1} \bigg(a_0+ a_1 \Big( a^jz+ \frac{(1-a^j)b}{1-a}\Big)+ a_2\Big( a^jz+\frac{(1-a^j)b}{1-a}\Big)^2\bigg)\quad \quad \quad \nonumber\\
= na_0 + \frac{a_1bn}{1-a}- \frac{a_1b(1-a^n)}{(1-a)^2}+  \frac{a_1(1-a^n)}{1-a}z+   \frac{a_2(1-a^{2n})}{1-a^2} z^2\nonumber\\
+\frac{a_2b^2}{(1-a)^2}\bigg(n-\frac{2(1-a^n)}{1-a}+\frac{1-a^{2n}}{1-a^2}\bigg) +\frac{2a_2zb}{1-a}\bigg(\frac{1-a^n}{1-a}-\frac{1-a^{2n}}{1-a^2} \bigg)\nonumber\\
= na_0+  \frac{a_1bn}{1-a}- \frac{a_1b(1-a^n)}{(1-a)^2}+ \frac{a_2b^2}{(1-a)^2}\bigg(n-\frac{2(1-a^n)}{1-a}+\frac{1-a^{2n}}{1-a^2}\bigg)\nonumber\\
+   \bigg(\frac{a_1(1-a^n)}{1-a}+   \frac{2a_2b}{1-a}\bigg(\frac{1-a^n}{1-a}-\frac{1-a^{2n}}{1-a^2} \bigg)\bigg) z+ \frac{a_2(1-a^{2n})}{1-a^2} z^2.
\end{align}
Now taking this into account and the fact that $ W_{(u,\psi)}^n=  W_{(u_n,\psi^n)} $, the corresponding notation in \eqref{bounded} becomes
\begin{align}
\label{newsup}
M(u_n, \psi^n)= \sup_{z\in \CC} |u_n(z)|e^{\frac{1}{2}\big( \left| a^n z + \frac{b(1-a^{n})}{1-a} \right|^2-|z|^2\big)}\nonumber\\
= e^{c_n} \sup_{z\in \CC} e^{\Re(t_nz)+ \Re(p_nz^2)-q_n|z|^2}\nonumber\\
\leq e^{c_n} \sup_{z\in \CC} e^{\Re(t_nz)+ (|p_n|-q_n)|z|^2}
\end{align} where $c_n$ is the real part of the expression
$$
 na_0 + \frac{a_1bn}{1-a}- \frac{a_1b(1-a^n)}{1-a}^2+ \frac{a_2b^2}{(1-a)^2}\bigg(n-\frac{2(1-a^n)}{1-a}+\frac{1-a^{2n}}{1-a^2}\bigg)+ \Big|\frac{b(1-a^{n})}{1-a} \Big|^{2},
$$

$$
t_n= \frac{a_1(1-a^n)}{1-a}+   \frac{2a_2b}{1-a}\bigg(\frac{1-a^n}{1-a}-\frac{1-a^{2n}}{1-a^2} \bigg)+ \overline{\frac{(1-a^n)b}{1-\overline{a}}} a^n,
$$
$$
p_n=  \frac{a_2(1-a^{2n})}{1-a^2}, \ \ \text{and} \ \ \  q_n= \frac{1-|a|^{2n}}{2}.
$$
Now to estimate the supremum in \eqref{newsup}, we claim that
$$|p_n|-q_n= |a_2|\Big| \frac{1-a^{2n}}{1-a^2} \Big| - \frac{1-|a|^{2n}}{2}<0.$$
Observe   that the  inequality holds  if and only if
\begin{align}
|a_2| <\frac{(1-|a|^{2n})}{\big|1-a^{2n}\big|}  \frac{\big|1-a^2\big|}{2}.
\end{align}  This follows immediately from Lemma~\ref{techlemma}  as  $|a_2|<\frac{1-|a|^2}{2}$.

It follows from this and  \eqref{newsup} that
\begin{align*}
M(u_n, \psi^n) \lesssim  e^{c_n}.
\end{align*}
On the other hand, since $|a|<1$
\begin{align*}
c_n \leq  n\Re\bigg(a_0 + \frac{a_1b}{1-a}+\frac{a_2b^2}{(1-a)^2}\bigg) + C
\end{align*}  for some positive constant $C$ and hence
$$
e^{c_n} \lesssim e^{n\Re\big(a_0 + \frac{a_1b}{1-a}+\frac{a_2b^2}{(1-a)^2}\big)}\simeq  \Big|u\Big (\frac{b}{1-a}\Big)\Big|^n
$$
 from which  and the assumption that $\Big|u\big(\frac{b}{1-a}\big)\Big|\leq 1$,  the condition in  \eqref{power2} follows.

Next, we consider the case  when $p <\infty$ and consider first the case $a=0$.  Then $\psi^n(z)=b$,  $ u_n(z)=  u(z)(u(b))^{n-1}$ and  applying \eqref{global},
\begin{align*}
\| W_{(u,\psi)}^{n} f \|^{p}_{p} = \frac{p}{2\pi}\int_{\CC} \big| f ( b ) \big|^{p} |u_n(z)|^p e^{-\frac{p}{2}|z|^2}dA(z) \quad \quad \quad \quad \quad \quad \quad\\
= \frac{p}{2\pi}\int_{\CC} \big| f ( b ) \big|^{p} |u(z)|^p |u(b)|^{(n-1)p} e^{-\frac{p}{2}|z|^2}dA(z) \quad \quad \quad \quad \quad \quad\\
=|f(b)|^{p}|(u(b))|^{(n-1)p} \| u \|_{p}^{p}\leq |(u(b))|^{(n-1)p} \| u \|_{p}^{p} e^{|b|^{2}}  \| f \|^{p}_{p}.
\end{align*}From which we arrive at  the claim. Here note that since $W_{(u,\psi)}$ is bounded, the multiplier $u$ belongs to $\mathcal{F}_p$ for all $p$.

 If $a\neq0$,  then applying    the local point estimate in \eqref{pointwise},
\begin{align}
\label{Fubini}
\| W_{(u,\psi)}^{n} f \|^{p}_{p} = \frac{p}{2\pi}\int_{\CC} \left| f \left( a^n z + \frac{b(1-a^{n})}{1-a} \right) \right|^{p} |u_n(z)|^p e^{-\frac{p}{2}|z|^2}dA(z) \nonumber \\
\leq \frac{p}{2\pi}\int_{\CC} e^{\frac{p}{2} \big(\left| a^n z + \frac{b(1-a^{n})}{1-a} \right|^2-|z|^2\big)} |u_n(z)|^p\quad \quad \nonumber\\
\times \int_{D\big(a^n z + \frac{b(1-a^{n})}{1-a}, 1\big)} | f (w)|^{p}e^{-\frac{p}{2}|w|^2} dA(w)dA(z)\nonumber \\
=\frac{p}{2\pi}\int_{\CC}\int_{\CC} e^{\frac{p}{2} \big(\left| a^n z + \frac{b(1-a^{n})}{1-a} \right|^2-|z|^2\big)} |u_n(z)|^p\quad \quad \quad \nonumber\\
\chi_{D\big(a^n z + \frac{b(1-a^{n})}{1-a}, 1\big)}(w) | f (w)|^{p}e^{-\frac{p}{2}|w|^2} dA(w)dA(z).\quad \quad \quad
\end{align}
Observe that if  $w\in D\big(a^n z + \frac{b(1-a^{n})}{1-a}, 1\big)$,  then
\begin{align*}
1\geq \bigg| w- a^n z - \frac{b(1-a^{n})}{1-a}\bigg|= |a|^n \bigg| \frac{w}{a^n}-  z - \frac{b(1-a^{n})}{a^n(1-a)}\bigg|\\
= |a|^n \bigg| z-\Big(\frac{w}{a^n} - \frac{b(1-a^{n})}{a^n(1-a)}\Big)\bigg|,
\end{align*}    which holds true if and only if
\begin{align*}
\frac{1}{|a|^n} \geq \bigg| z-\Big(\frac{w}{a^n} - \frac{b(1-a^{n})}{a^n(1-a)}\Big)\bigg|.
\end{align*} Thus, $w$ belongs to the disk
\begin{align*}
 D\bigg(a^n z + \frac{b(1-a^{n})}{1-a}, 1\bigg)
 \end{align*}
  if and only if  $z$ belongs to
\begin{align*}D\bigg(\frac{w}{a^n} -\frac{b(1-a^{n})}{1-a}, \frac{1}{|a|^n}\bigg).\end{align*}
Making use of this and Fubini's theorem in \eqref{Fubini}
\begin{align*}
\| W_{(u,\psi)}^{n} f \|^{p}_{p}
\leq \frac{p}{2\pi} \int_{\CC}  | f (w)|^{p}e^{-\frac{p}{2}|w|^2} \quad\quad \nonumber\\
\times  \bigg(\int_{\CC} e^{\frac{p}{2} \big(\Big| a^n z + \frac{b(1-a^{n})}{1-a} \Big|^2-|z|^2\big)} |u_n(z)|^p
\chi_{D\big(\frac{w}{a^n} -\frac{b(1-a^{n})}{1-a}, \frac{1}{|a|^n}\big)}(z)  dA(z)\bigg)dA(w).
\end{align*}
Using Lemma~\ref{newlem} and simplifying  like  the case for   $p= \infty$, we get
\begin{align*}
e^{\frac{p}{2} \big(\Big| a^n z + \frac{b(1-a^{n})}{1-a} \Big|^2-|z|^2\big)} |u_n(z)|^p
\leq e^{pc_n} e^{p|t_n||z|-p\big(\frac{1-|a|^{2n}}{2}-\big| \frac{a_2(1-a^{2n})}{1-a^2}\big|\big)|z|^2}
\end{align*} for all $z\in \CC$. Since $ W_{(u,\psi)} $  is compact  and  $|a_2|<\frac{1-|a|^2}{2}$, it follows that
$$ q_n-|p_n|=\frac{1-|a|^{2n}}{2}-\Big| \frac{a_2(1-a^{2n})}{1-a^2}\Big|>0$$
 and
\begin{align*}
\int_{\CC} e^{\frac{p}{2} \big(\Big| a^n z + \frac{b(1-a^{n})}{1-a} \Big|^2-|z|^2\big)} |u_n(z)|^p
\chi_{D\big(\frac{w}{a^n} -\frac{b(1-a^{n})}{1-a}, \frac{1}{|a|^n}\big)}(z)  dA(z)\quad \quad \quad \quad \quad \\
\leq \int_{\CC} e^{pc_n} e^{p|t_n||z|-p\big(\frac{1-|a|^{2n}}{2}-\big| \frac{a_2(1-a^{2n})}{1-a^2}\big|\big)|z|^2} dA(z)
\lesssim e^{pc_n}
\end{align*} where we used the fact that $\sup_{n\in \NN} t_n <\infty$ and $ \inf_n\{ q_n-|p_n|\}>0$.
Hence,
\begin{align*}
\| W_{(u,\psi)}^{n} f \|_p\lesssim e^{c_n}\|f\|_p
\end{align*}
and  the conclusion follows as in the last part of $p= \infty$, and   completes the proof of the theorem.

\section{The spectrum of $W_{(u,\psi)}$ }\label{spectra}
In this section,  we study the spectral  of  weighted compositions operators on  all the  Fock spaces $ \mathcal{F}_p, 1\leq p\leq \infty$.
Let $T$ be a bounded linear operator on a Banach space $\mathcal{X}$. Then the  spectrum $\sigma (T)$ of $T$  is the set
$\{\lambda\in \CC: T-\lambda I\ \ \text{is not invertible}\} $ where $I$ is the identity operator on $\mathcal{X}$. The spectrum $\sigma (T)$ is always a non-empty compact and closed subset of the disk centered at the origin and of radius $\|T\|$.   It has been well-known that
the spectrum of an operator plays a vital roll in the study of its  dynamical properties; see \cite{Grosse}. Our next result will be used to
prove  mean ergodic results in the next section  apart from being interest of its own.
\begin{theorem}\label{spectrum}
 Let $1\leq p\leq \infty, \ u,  \ \psi \in \mathcal{H}(\CC)$ and  $W_{(u,\psi)}$ be  bounded  on $\mathcal{F}_p$ and hence $\psi(z)= az+b$ with $|a|\leq1$. Then if
 \begin{enumerate}
 \item  $W_{(u,\psi)}$  is compact and hence $|a|<1$, then
 \begin{align}
 \label{spectrum}
\sigma(W_{(u,\psi)})=
\Big\{0, \  u\Big(\frac{b}{1-a}\Big) a^m, \ \  m\in \NN_0\Big\}.
\end{align}
\item $|a|=1$, then
 \begin{align*}
\sigma(W_{(u,\psi)})=
\begin{cases}
\overline{\Big\{ u(0)e^{\frac{a|b|^2}{a-1}} a^m: m\in \NN_0 \Big\}},  \ a\neq1 \\
\left\{ z: |z|  = |u(0)| e^{\frac{|b|^{2}}{2}}  \right\}, \ \ \ a=1, \ b\neq 0\\
\{u(0)\}, \ \ \ a=1, \ \ b= 0.
\end{cases}
\end{align*}
\end{enumerate}
\end{theorem}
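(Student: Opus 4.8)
The plan is to treat the possibilities for $a$ separately, in each case conjugating $W:=W_{(u,\psi)}$ by a bounded invertible weighted composition operator of ``Weyl type'' — one of the form $W_{(e^{-\overline c\,z},\,z\mapsto z+c)}$, which \eqref{bounded} shows is bounded and invertible on every $\mathcal F_p$. Conjugation leaves the spectrum unchanged, and these conjugators serve to move fixed points of $\psi$ to the origin (when $|a|<1$, or when $|a|=1$ with $a\neq1$) or to twist $W$ by a unimodular scalar (when $a=1$, $b\neq0$); this reduces the problem to operators whose spectrum is transparent. In the compact case one additionally uses the standard structure of compact operators.

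\emph{The compact case (i).} Since $W$ is compact on the infinite-dimensional space $\mathcal F_p$ it is not invertible, so $0\in\sigma(W)$, and $\sigma(W)\setminus\{0\}$ consists of eigenvalues accumulating only at $0$. As $|a|<1$, $\psi$ has the unique fixed point $z_0=\tfrac{b}{1-a}$, the translation $\tau(z)=z+z_0$ satisfies $\tau^{-1}\!\circ\psi\circ\tau(z)=az$, and \eqref{bounded} shows $T:=W_{(e^{\overline{z_0}z},\,\tau^{-1})}$ and $T^{-1}$ are bounded on every $\mathcal F_p$. Conjugating, $T^{-1}WT=W_{(\tilde u,\psi_0)}$ with $\psi_0(z)=az$; a short computation gives $\tilde u(0)=u(z_0)$, and $W_{(\tilde u,\psi_0)}$ is again compact. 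Writing $\tilde u=\sum_{k\ge0}c_kz^k$ (so $c_0=u(z_0)$) and $f=\sum_{n\ge0}d_nz^n$, the identity $\tilde u(z)f(az)=\lambda f(z)$ is equivalent to the triangular recursion $(c_0a^m-\lambda)d_m=-\sum_{n=0}^{m-1}c_{m-n}a^nd_n$, $m\ge0$; hence any eigenvalue $\lambda\notin\{u(z_0)a^m:m\in\NN_0\}$ forces $d_0=d_1=\cdots=0$, so $\sigma(W)\setminus\{0\}\subseteq\{u(z_0)a^m:m\in\NN_0\}$. For the reverse inclusion one produces, for each $m$, an eigenvector for $u(z_0)a^m$ (if $u(z_0)=0$ the claimed set is $\{0\}=\sigma(W)$ and nothing more is needed): when $u$ is non-vanishing, Lemma~\ref{newlem} gives $\tilde u(z)=e^{b_0+b_1z+b_2z^2}$ with $|b_2|<\tfrac{1-|a|^2}{2}$, and $f_m(z)=z^m\exp\!\big(\tfrac{b_1}{1-a}z+\tfrac{b_2}{1-a^2}z^2\big)$ is such an eigenvector lying in every $\mathcal F_p$, since $\big|\tfrac{b_2}{1-a^2}\big|\le\tfrac{|b_2|}{1-|a|^2}<\tfrac12$; for general $u$ the eigenvector is $z^m\prod_{j\ge0}\tilde u(a^jz)/\tilde u(0)$, whose membership in $\mathcal F_p$ follows from the compactness bound $|\tilde u(z)|e^{\frac12(|az|^2-|z|^2)}\to0$. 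Altogether $\sigma(W)=\{0\}\cup\{u(\tfrac{b}{1-a})a^m:m\in\NN_0\}$.

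\emph{The case $|a|=1$.} If $a=1$ and $b=0$, then $\psi=\mathrm{id}$ and Lemma~\ref{lem0} forces $u\equiv u(0)$, so $W=u(0)I$ and $\sigma(W)=\{u(0)\}$. If $a\neq1$ (and $|a|=1$), then $z_0=\tfrac{b}{1-a}$ is again the fixed point of $\psi$ and the same conjugation yields $T^{-1}WT=W_{(\tilde u,\psi_0)}$, $\psi_0(z)=az$; but now $W_{(\tilde u,\psi_0)}$ is bounded with $|a|=1$, so Lemma~\ref{lem0} applied with symbol $az$ (for which $K_{-\overline a\cdot0}\equiv1$) forces $\tilde u$ to be the constant $\tilde u(0)=u(z_0)=u(0)e^{\frac{a|b|^2}{a-1}}$. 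Hence $W$ is similar to $u(z_0)C_{\psi_0}$, and it suffices to compute $\sigma(C_{\psi_0})$ for $\psi_0(z)=az$, $|a|=1$. Here $C_{\psi_0}$ is invertible with $\|C_{\psi_0}\|=\|C_{\psi_0}^{-1}\|=1$ by \eqref{norm}--\eqref{norm2} (the factor $|a|^{-2/p}$ equals $1$), so $\sigma(C_{\psi_0})\subseteq\partial\D$; and since $C_{\psi_0}z^m=a^mz^m$, we get $\overline{\{a^m:m\in\NN_0\}}\subseteq\sigma(C_{\psi_0})\subseteq\partial\D$. If $a$ is a root of unity of order $d$, then $C_{\psi_0}^d=I$ gives $\sigma(C_{\psi_0})\subseteq\{a^m:m\in\NN_0\}$; if $a$ is not a root of unity, then $\overline{\{a^m:m\in\NN_0\}}=\partial\D$. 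Either way $\sigma(C_{\psi_0})=\overline{\{a^m:m\in\NN_0\}}$, and scaling by $u(z_0)$ yields the asserted formula.

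\emph{The case $a=1$, $b\neq0$, and the main obstacle.} Now $\psi(z)=z+b$ has no fixed point, so a different argument is needed, and this is the crux of the theorem. By Lemma~\ref{lem0}, $u(z)=u(0)e^{-\overline bz}$; evaluating \eqref{bounded} gives $\|W\|=M(u,\psi)=|u(0)|e^{|b|^2/2}=:c$, and $W$ is invertible with $W^{-1}=W_{(w,\varphi)}$ for $\varphi(z)=z-b$, $w(z)=\tfrac{1}{u(0)}e^{-|b|^2}e^{\overline bz}$, for which \eqref{bounded} gives $\|W^{-1}\|=1/c$. Consequently $\sigma(W)\subseteq\{|z|\le c\}$, while a nonzero $\lambda\in\sigma(W)$ has $1/\lambda\in\sigma(W^{-1})$ and hence $|\lambda|\ge c$; so $\sigma(W)\subseteq\{|z|=c\}$. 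The remaining task — to show this circle lies \emph{entirely} in $\sigma(W)$ — is where the work is: for $d\in\CC$ set $S=W_{(e^{-\overline dz},\,z\mapsto z+d)}$, a bounded invertible weighted composition operator; since translations commute the symbol of $S^{-1}WS$ is still $\psi$, and a short computation gives $S^{-1}WS=e^{\overline bd-\overline db}\,W$. The exponent $\overline bd-\overline db$ is purely imaginary and runs over all of $i\R$ as $d$ runs over $\CC$, so $W$ is similar to $e^{i\theta}W$ for every $\theta\in\R$, whence $\sigma(W)$ is invariant under all rotations about the origin. A non-empty closed rotation-invariant subset of the circle $\{|z|=c\}$ is the whole circle, so $\sigma(W)=\{z:|z|=|u(0)|e^{|b|^2/2}\}$. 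The delicate point throughout is thus the construction of explicit weighted-composition similarities: the unimodular ``twist'' $S^{-1}WS=e^{i\theta}W$ needed here, which genuinely relies on $b\neq0$; and, in part (i), the verification that the candidate eigenvectors really belong to $\mathcal F_p$, handled by Lemma~\ref{newlem} when $u$ is non-vanishing and by the compactness growth bound in general.
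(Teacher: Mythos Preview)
Your proof is correct and follows the same overall strategy as the paper: conjugation by the ``Weyl'' operators $W_{(k_{\pm z_0},\,z\mapsto z\pm z_0)}$ to move the fixed point to the origin (cases $|a|<1$ and $|a|=1,\,a\neq1$), and the rotation-similarity $S^{-1}WS=e^{i\theta}W$ to force rotation-invariance of the spectrum (case $a=1,\,b\neq0$). Two points are worth flagging.

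\textbf{Part (i), the inclusion $\{u(z_0)a^m\}\subseteq\sigma(W)$.} Here your route differs from the paper's. You construct explicit eigenvectors $z^m\prod_{j\ge0}\tilde u(a^jz)/\tilde u(0)$; the paper instead shows that $z^n$ fails to lie in the range of $W-u(z_0)a^nI$ (arguing by repeated differentiation at the fixed point). Both are valid. Your approach is more constructive and, when $u$ is non-vanishing, the use of Lemma~\ref{newlem} makes membership in $\mathcal F_p$ transparent. For general $u$, however, your one-line appeal to the compactness bound $|\tilde u(z)|e^{\frac12(|az|^2-|z|^2)}\to0$ is doing real work: one must split the product at the scale $|a^jz|\approx R$, use the $o(\cdot)$ bound on the large-argument factors (which yields a factor $|z|^{-K}$ with $K$ as large as one likes), and control the bounded number of mid-range factors and the convergent tail separately. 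This can be carried out, but as written it is a sketch; the paper's non-surjectivity argument sidesteps this entirely.

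\textbf{Part (ii), $a\neq1$.} You compute $\sigma(C_{\psi_0})=\overline{\{a^m:m\in\NN_0\}}$ directly from $\|C_{\psi_0}^{\pm1}\|=1$ and the monomial eigenvectors, whereas the paper simply cites \cite{TW2}. Your argument is self-contained and correct; the root-of-unity dichotomy you give is exactly what is needed.
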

\noindent
We now remark a few points. First, observe that the number $b/(1-a)$ in \eqref{spectrum} is the fixed point of the symbol $\psi$.  We also note that when $a\neq1$ and $|a|=1$ the expression in the spectrum can be expressed in terms of this fixed point. That is from Lemma~\ref{lem0},  it follows that $ u(0)e^{\frac{a|b|^2}{a-1}} a^m=  u(b/(1-a))a^m$. In this case,   the spectrum contains finite number of points only when
$a$ is  a root of unity.
Next, for $a\neq1,$ the necessity of the condition that $|u(b/(1-a))|\leq 1$ in Theorem~\ref{thm1} and Theorem~\ref{thm2} can be easily deduced using Theorem~\ref{spectrum} since the spectrum of a power bounded operator is always a subset of the closed unit disc. Here  for $|a|=1$ and $a\neq1$,  note that
$\big| u(0)e^{\frac{a|b|^2}{a-1}}\big| =  \big|u\big(\frac{b}{1-a}\big)\big|$. Another application of Theorem~\ref{spectrum} will be given in Section~\ref{uniform}. It is known that the spectrum plays an essential roll in the study of theory of semigroups of linear operators \cite{EN} as well.
\subsection{Proof of Theorem~\ref{spectrum}}\label{prspec}
(i).
  Let $W_{(u,\psi)}$  is compact and hence $|a|<1$.
Here our proof is based on an argument  that goes back to \cite{HK}. We set $z_0= b/(1-a)$ and  plan to show that  the range of $W_{(u,\psi)}- a^nu(z_0)I$ fails to contain the complex polynomial $z^n$. Setting $n= 1$ and arguing in the direction of
contradiction, assume that there exists an $f\in \mathcal{F}_p$  such that
\begin{align}
\label{one}
u(z) f(\psi(z))- au(z_0) f(z)= z.
\end{align}
If $u(z_0)= 0$ or a= 0, then $au(z_0)= 0$ and belongs to the spectrum. Thus,  we may assume that $z_0$ is not in the zero set of $u$ and $a\neq0$.
 First assume that $ z_0=0$. Then  taking $z=0$ in \eqref{one},  we obtain that $f(0)=0$.
 On the other hand, differentiating both side of equation \eqref{one} and setting again $z=0$, we obtain
\begin{align*}
u'(0) f(\psi(0))+ u(0) \psi'(0) f'(\psi(0))-au(0)f'(0)= 1
\end{align*} which results  the contradiction $0=1$.

\noindent Similarly   for $n>1$,  differentiating  both side of the equation
 \begin{align*}u(z) f(\psi(z))- a^nu(z_0) f(z)= z^n
 \end{align*}repeatedly and eventually setting $z=0,$    we obtain
$
 f^{(m)}(0)=0
$ for all $m<n$ while for $m=n$ we get  again the contradiction  $0= n!$.\\

\noindent  If $ z_0\neq0,$ then  we may set $\psi_1(z)=  az$,
  \begin{align*}
  u_1(z)= \frac{u(z+z_0)}{\|K_{-z_0}\|_2^{2} } e^{-\overline{z_0}z+ \overline{z_0}(az+az_0+b)}
 \end{align*} and observe that $\psi_1(0)= 0$ and $u_1(0)= u(z_0)$. A  straightforward calculation  shows that
 \begin{align*}
 W_{(u_2,\psi_2)} W_{(u, \psi)} W_{(u_2,\psi_2)}^{-1}= W_{(u_1, \psi_1)}
 \end{align*}  and     $ W_{(u_2,\psi_2)}^{-1}= W_{(u_3, \psi_3)}$ where  $u_2(z)= k_{-z_0}(z)$,  $\psi_2(z)= z+z_0$, $u_3=k_{z_0}$,   and  $\psi_3(z)=z-z_0$.
It follows that  the weighted composition operators  $W_{(u_1, \psi_1)}$ and $W_{(u, \psi)}$ are similar  and
   have the same spectrum, and our conclusion follows from the first  case discussed above.     Therefore, the set in the right-hand side of \eqref{spectrum} in this case  is contained in the spectrum.\\
Conversely, if  $|a|<1$, then $W_{(u, \psi)} $ is compact and its spectrum contains only zero and eigenvalues. Thus,
we consider a nonzero eigenvalue $\lambda\in  \sigma(W_{(u,\psi)}$  and  show that it is of  the form  $u(z_0)a^n$ for some positive integer $n$.
 If $f$ is a  corresponding nonzero eigenvector, then
  \begin{align}
  \label{eigen}
 W_{( u,\psi)} f(z)=  u(z)f(az+b)= \lambda f(z)
  \end{align}
   for all $z$ in $\CC$.  If $f$ has no zero at $z_0$, then   \eqref{eigen} implies
  $  u(z_0)= \lambda
 $ and hence $\lambda=  a^0 u(z_0)$. On the other hand, if $f$ has  zero at $z_0$ of order $m$, we may write
 \begin{align*}
 f(z)= (z-z_0)^m g(z)
 \end{align*} where $g(z_0)\neq 0$.  Then substituting $f$ by   this  in \eqref{eigen} and
  differentiating both sides of the equation  $m$ times and eventually
  setting $z=z_0$, we only  get
  \begin{align}
  \label{last}
a^m u(z_0) m! g(z_0)= \lambda m! g(z_0)
  \end{align} as all the other terms have factor $z-z_0$ and vanish. Now, $g(z_0)$ is non-zero and  \eqref{last}  holds only if  $\lambda=a^m u(z_0)$ as asserted.

 The argument in the proof of  part (ii) is divided into  three  cases depending on the values of $a$ and $b$. \\
\noindent\emph{Case 1}. Let  $|a|=1$ and  $a\neq1$. For simplicity we first set $\psi_{z_0}(z) = z-z_0$ and claim that  the weighted composition operator induced by $(k_{z_0}, \psi_{z_0})$ is an isometric bijective map on $\mathcal{F}_p$ with inverse  $W_{(k_{-z_0},\psi_{z_0}^{-1})}$. To this claim, for every  $f\in \mathcal{F}_{p}$
\begin{align*}
   \|W_{(k_{z_0},\psi_{z_0})}f\|_p^p   = \frac{p}{2\pi} \int_{\CC} |k_{z_0}(z)|^p |f(z-z_0)|^p e^{-\frac{p}{2}|z|^2}dA(z) \quad \quad \quad \quad \\
   =\frac{p}{2\pi}  \int_{\CC}  |f(z-z_0)|^p e^{-\frac{p}{2}|z-z_0|^2} \bigg( |k_{z_0}(z)|^p e^{\frac{p}{2}|z-z_0|^2-\frac{p}{2}|z|^2}\bigg) dA(z)\\
   = \frac{p}{2\pi} \int_{\CC} |f(z-z_0)|^p e^{-\frac{p}{2}|z-z_0|^2} dA(z)=\|f\|_p^p
   \end{align*} for all $1\leq p<\infty$ which also holds true for $p= \infty$.    This shows  that the operator is a linear  isometry and hence  satisfies the injectivity  condition   $ W_{(k_{z_0},\psi_{z_0})}^{-1} W_{(k_{z_0},\psi_{z_0})}= I.$
   On the other hand, for  each $f\in \mathcal{F}_{p}$
   \begin{align*}
   W_{(k_{z_0},\psi_{z_0})} W_{(k_{-z_0},\psi_{z_0}^{-1})} f(z)= k_{z_0}(z) k_{-z_0}(z-z_{0})  f(z)= f(z)
   \end{align*} which also shows that  $W_{(k_{z_0},\psi_{z_0})}W_{(k_{z_0},\psi_{z_0})}^{-1}= I$, and hence  the claim.\\
Next,   using $z_0= b/(1-a)$ and Lemma~\ref{lem0} for every  $f\in \mathcal{F}_{p}$ we compute
\begin{align*}
W_{(k_{-z_0},\psi_{z_0}^{-1})} W_{(u,\psi)} W_{(k_{z_0},\psi_{z_0})}  f(z)\quad \quad \quad \quad\quad \quad \quad \quad\quad \quad \quad \quad\\
= k_{-z_0}(z)u(\psi_{z_0}^{-1}(z)))k_{z_0}(\psi(\psi_{z_0}^{-1}(z)) f( \psi_{z_0}(\psi(\psi_{z_0}^{-1}(z))))\\
= k_{-z_0}(z)u(0) K_{-\overline{a}b}(z+z_0) k_{z_0}(az+b+az_0) f(az)= u(0)e^{\frac{|b|^2}{1-\overline{a}}}  C_{\Psi_0}  f(z)
\end{align*} where $C_{\psi_0}$ the composition operator induced by the symbol $\Psi_0(z)= az$.  This shows that $ W_{(u,\psi)} $ is similar to the  composition operator, up to a multiple,  $C_{\Psi_0}$. Thus, $\sigma ( W_{(u,\psi)})=  u(0)e^{\frac{|b|^2}{1-\overline{a}}}  \sigma (C_{\Psi_0})$.  Using the spectrum of $C_{\Psi_0}$ from Theorem~2.6 of \cite{TW2} and observing that  $(1-\overline{a})^{-1}= a/(1-a) $ when $|a|=1$ and $a\neq1$, we arrive at  the desired conclusion.

\noindent \emph{Case 2}. Let  $a=1$ and $b\neq 0$.  Applying Lemma~ \ref{lem0},
  \begin{align} \label{spec1}
  W_{u, \psi} = u \cdot  C_{\psi}  =u(0)K_{-b} C_{\psi} = u(0)e^{\frac{|b|^{2}}{2}} k_{-b} C_{\psi}  = u(0)e^{\frac{|b|^{2}}{2}}  W_{ ( k_{-b}, \psi ) }
  \end{align}
  The weighted composition operator  $ W_ { ( k_{-b}, \psi ) }    $ is unitary. Recall that the spectrum of a unitary operator lies on the unit circle $\mathbb{T}$.
 We claim that the spectrum of  $ W_ { ( k_{-b}, \psi ) }    $ is $\mathbb{T}$.
 To prove the claim,  for any nonzero  $w\in \CC$ and $f\in \mathcal{F}_{p}$ we have
\begin{align*}
W_{(k_{-w},\psi_{w}^{-1})} W_{(k_{-b},\psi)} W_{(k_{w},\psi_{w})}  f(z)\quad \quad \quad \quad\quad \quad \quad \quad\quad \quad \quad \quad\quad \quad \quad \quad\quad \quad \quad\\
= k_{-w}(z) k_{-b}(z+w) k_{w}(z+b+w) f(z+b) \quad\quad \quad \quad \quad\quad \quad \quad\\
=k_{-b}(z)e^{2i \Im (\overline w b)}f(z+b)
= e^{2i \Im (\overline w b)}W_{(k_{-b},\psi)}  f(z)\quad \quad
\end{align*} which shows that $ W_{(k_{-b},\psi)}$ is similar to $e^{2i \Im (\overline w b)}W_{(k_{-b},\psi)}$ for any  $w\in \CC$.
 Since $b\neq 0,$ and $e^{2i \Im (\overline w b)}$ is unimodular, and the spectrum of a unitary operator lies on the unit circle $\mathbb{T}$, it follows that  the whole unit circle constitutes the spectrum.  \\
 Therefore, combining this with \eqref{spec1}
  \begin{align*}
  \sigma (W_ { ( u, \psi )}) = u(0)e^{\frac{|b|^{2}}{2}} \sigma\big(  W_ { ( k_{-b}, \psi ) }\big) = u(0)e^{\frac{|b|^{2}}{2}} \mathbb{T} =\left \{ z : |z|=  |u(0)| e^{\frac{|b|^{2}}{2}}      \right\}.
  \end{align*} and completes the proof.

\noindent\emph{Case 3}. Let  $a=1$ and $b= 0$. In this case, the operator  $W_{(u,\psi)}$ reduces to the multiplication operator $M_u$ where its spectrum has been  already  identified  in Lemma~2.3 of \cite{TM5}.

\section{Uniformly mean ergodic $W_{(u,\psi)}$  } \label{uniform}
Having  identified conditions under which  $W_{(u,\psi)}$  is power bounded, we next turn our attention to the mean and uniformly mean ergodic properties of  $W_{(u,\psi)}$ on  $\mathcal{F}_p$.  We may first
state the following result about compact weighted composition operators on the spaces.
\begin{proposition}
Let $1\leq p\leq \infty$ and $W_{(u,\psi)}$ be a compact power  bounded operator on  $\mathcal{F}_p$, and hence   $\psi(z)= az+b$ such that $|a|<1$. Let $u$ be  non-vanishing on $\CC$ .  Then  $W_{(u,\psi)}$ is uniformly  mean ergodic.
\end{proposition}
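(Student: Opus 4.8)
The plan is to read the conclusion off the spectral picture of Theorem~\ref{spectrum}. Write $z_0=b/(1-a)$ for the (unique, since $|a|<1$) fixed point of $\psi$, put $\lambda_0=u(z_0)$, and abbreviate $W=W_{(u,\psi)}$. Power boundedness and Lemma~\ref{lem2}(i) give $|\lambda_0|\le 1$, and Theorem~\ref{spectrum} gives $\sigma(W)=\{0\}\cup\{\lambda_0 a^m:m\in\NN_0\}$. Since $|a|<1$ and $|\lambda_0|\le 1$ we have $|\lambda_0 a^m|\le|a|^m<1$ for every $m\ge1$, so $1\in\sigma(W)$ can happen only when $\lambda_0=1$. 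This dichotomy drives the whole argument. Throughout, let $W_n=\frac1n\sum_{m=1}^n W^m$ denote the Ces\`aro means and let $M=\sup_{k\in\NN}\|W^k\|<\infty$ be the power bound.

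First I would dispose of the case $\lambda_0\ne1$. Then $1\notin\sigma(W)$, so $I-W$ is invertible, and from the identity $\sum_{m=1}^n W^m=W(I-W)^{-1}(I-W^n)$ (valid because $W$ commutes with $(I-W)^{-1}$) one obtains
\begin{align*}
\|W_n\|\;\le\;\frac1n\,\|W\|\,\|(I-W)^{-1}\|\,(1+M)\;\longrightarrow\;0 ,
\end{align*}
so $W$ is uniformly mean ergodic with limit projection $P=0$. (When $|\lambda_0|<1$ this is even more transparent: the spectral radius of $W$ is then $|\lambda_0|<1$, hence $\|W^n\|\to0$ by Gelfand's formula and $W_n\to0$ trivially.)

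The case $\lambda_0=1$ is the one requiring care. Here $1\in\sigma(W)$ and it is an \emph{isolated} point of the spectrum, since the remaining nonzero spectral values $a^m$ ($m\ge1$) have modulus $<1$ and accumulate only at $0$. Because $W$ is compact, $1$ is an eigenvalue of finite multiplicity, so the associated Riesz projection $P=\frac{1}{2\pi i}\oint_{|\zeta-1|=\varepsilon}(\zeta I-W)^{-1}\,d\zeta$ is finite-rank, commutes with $W$, and yields a $W$-invariant splitting $\mathcal{F}_p=\ker P\oplus\operatorname{ran}P$. On the finite-dimensional space $\operatorname{ran}P$ the restricted operator has spectrum $\{1\}$, hence equals $I+N$ with $N$ nilpotent; the crucial point is that the restriction is again power bounded, which forces $N=0$ (otherwise $\|(I+N)^n\|$ would grow like a positive power of $n$ on $\operatorname{ran}P$). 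Thus $W|_{\operatorname{ran}P}=I$ and $W_n|_{\operatorname{ran}P}=I$ for every $n$. On $\ker P$ the spectrum equals $\sigma(W)\setminus\{1\}$, of spectral radius $|a|<1$, so $\|(W|_{\ker P})^n\|\to0$ and therefore $W_n|_{\ker P}\to0$ in operator norm. Adding the two pieces gives $W_n\to P$ in operator norm, which is precisely uniform mean ergodicity.

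I expect the only real obstacle to be the bookkeeping in this last case: justifying the Riesz decomposition and, above all, ruling out a nontrivial nilpotent part via power boundedness; everything else is a one-line computation. Alternatively, the case analysis can be bypassed by appealing to Lin's uniform ergodic theorem --- power boundedness gives $\|W^n\|/n\to0$, compactness makes $I-W$ Fredholm and hence of closed range, and these two properties together characterize uniform mean ergodicity --- but the route through Theorem~\ref{spectrum} has the advantage of exhibiting the limit projection $P$ explicitly.
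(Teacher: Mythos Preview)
Your argument is correct, but it takes a considerably longer route than the paper does. The paper's proof is a single sentence: it invokes the classical Yosida--Kakutani theorem (compact + power bounded $\Rightarrow$ uniformly mean ergodic on any Banach space), with no use of the specific spectral information in Theorem~\ref{spectrum}.

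Your approach is different in kind: you exploit the explicit description of $\sigma(W)$ from Theorem~\ref{spectrum} together with $|u(z_0)|\le1$ from Lemma~\ref{lem2}, split into the cases $u(z_0)\ne1$ and $u(z_0)=1$, and in the latter case run the Riesz--Dunford functional calculus at the isolated point $1$, killing the nilpotent part via power boundedness. This is essentially a by-hand proof, in this concrete situation, of the abstract result the paper simply cites. The payoff is that you obtain the limit projection explicitly (either $P=0$ or the Riesz projection at $1$), which the paper's one-line proof does not; the cost is several paragraphs of spectral bookkeeping that the general theorem renders unnecessary. Your closing remark about Lin's theorem (closed range of $I-W$ from Fredholmness plus $\|W^n\|/n\to0$) is in the same spirit as the paper's citation and would also give a two-line proof.
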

\begin{proof} A result of Yosida and Kakutani (\cite{YK} Theorem 4 and Corollary on page 204--205 and Theorem~2.8 in \cite{Kr}) implies that every compact power bounded operator on a Banach space is uniformly mean ergodic. Thus,  $W_{(u,\psi)}$  is uniformly mean ergodic.
\end{proof}
\begin{theorem}
                \label{thmm3}
                Let $1\leq p< \infty$ and $W_{(u,\psi)}$ be a compact power  bounded operator on  $\mathcal{F}_p$, and hence   $\psi(z)= az+b$ such that $|a|<1$. Let $u$ be  non-vanishing on $\CC$ such that $u( z_0)=1$ where $z_0= b/(1-a)$.  Then
                \begin{align} \label{the limit}
\lim_{n \rightarrow \infty} \left \| \frac{1}{n} \sum_{k=1}^{n} W_{(u,\psi)}^{k} - W_{(u_\infty, z_0)} \right\|=0, \ \  u_\infty(z)=\prod_{j=0}^\infty u(\psi^j(z)).
\end{align}
 \end{theorem}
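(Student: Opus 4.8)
The plan is to read off the spectral structure of $W_{(u,\psi)}$ from Theorem~\ref{spectrum}(i), to identify the operator appearing in \eqref{the limit} as the Riesz projection of $W_{(u,\psi)}$ associated with the eigenvalue $1$, and then to compute that projection explicitly; in fact the argument will show that $W_{(u,\psi)}^k$ itself converges in operator norm, and the same limit for the Ces\`aro means is then immediate.

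First I would produce the natural fixed function and recognise it as $u_\infty$. By Lemma~\ref{newlem}, $u(z)=e^{a_0+a_1z+a_2z^2}$ with $|a_2|<\tfrac{1-|a|^2}{2}$; since $e^{a_0+a_1z_0+a_2z_0^2}=u(z_0)=1$, after subtracting a suitable multiple of $2\pi i$ from $a_0$ we may assume $a_0+a_1z_0+a_2z_0^2=0$. Writing $w=z-z_0$ and using $\psi^j(z)=z_0+a^jw$ one computes $u(\psi^j(z))=\exp\big(a^jw(a_1+2a_2z_0)+a^{2j}a_2w^2\big)$, so $u_k(z)=\prod_{j=0}^{k-1}u(\psi^j(z))=\exp\big(\tfrac{1-a^k}{1-a}w(a_1+2a_2z_0)+\tfrac{1-a^{2k}}{1-a^2}a_2w^2\big)$ converges uniformly on compacta, as $k\to\infty$, to the entire function $u_\infty(z)=\exp\big(\tfrac{a_1+2a_2z_0}{1-a}w+\tfrac{a_2}{1-a^2}w^2\big)$; in particular $u_\infty(z_0)=1$. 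A direct substitution using $\psi(z)-z_0=aw$ and comparing the coefficients of $w$ and $w^2$ then verifies $u(z)\,u_\infty(\psi(z))=u_\infty(z)$, i.e. $W_{(u,\psi)}u_\infty=u_\infty$.

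Next I would pin down the $1$-eigenspace and the pole order. By Theorem~\ref{spectrum}(i) together with $u(z_0)=1$ we have $\sigma(W_{(u,\psi)})=\{0\}\cup\{a^m:m\in\NN_0\}$, so $1=u(z_0)a^0$ is an eigenvalue of the compact operator $W_{(u,\psi)}$; pick an eigenfunction $g\in\mathcal F_p\setminus\{0\}$. The computation in the proof of Theorem~\ref{spectrum}(i) shows that a zero of $g$ at $z_0$ of order $m\ge1$ would force $a^m=1$, impossible for $|a|<1$; hence $g(z_0)\neq0$, and then $g/g(z_0)-u_\infty$ is an eigenfunction for $1$ vanishing at $z_0$, so it vanishes identically. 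Thus $u_\infty=g/g(z_0)\in\mathcal F_p$ and $\ker(I-W_{(u,\psi)})=\CC u_\infty$. Finally, $1$ is a \emph{simple} pole of the resolvent: if $(I-W_{(u,\psi)})^2f=0$ then $(I-W_{(u,\psi)})f=c\,u_\infty$ for some scalar $c$, and evaluating at $z_0$, where $u(z_0)=u_\infty(z_0)=1$, gives $c=f(z_0)-u(z_0)f(z_0)=0$, so $f\in\ker(I-W_{(u,\psi)})$.

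Now I would conclude. Since $1$ is isolated in $\sigma(W_{(u,\psi)})$, is its only point on the unit circle, and is a simple pole, the Riesz decomposition $\mathcal F_p=\CC u_\infty\oplus Y$ is $W_{(u,\psi)}$-invariant with $W_{(u,\psi)}|_{\CC u_\infty}=I$ and $\sigma(W_{(u,\psi)}|_Y)=\{0\}\cup\{a^m:m\ge1\}$, of spectral radius $|a|<1$; Gelfand's formula gives $\|(W_{(u,\psi)}|_Y)^k\|\to0$, hence $W_{(u,\psi)}^k\to P$ in operator norm, where $P$ is the projection onto $\CC u_\infty$ along $Y$, and therefore $\tfrac1n\sum_{k=1}^nW_{(u,\psi)}^k\to P$ in operator norm. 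To identify $P$: its range is $\CC u_\infty$, so $Pf=\phi(f)\,u_\infty$ for some linear functional $\phi$; and since $z_0$ is the fixed point of $\psi$ we have $u_k(z_0)=u(z_0)^k=1$ and $\psi^k(z_0)=z_0$, so $(W_{(u,\psi)}^kf)(z_0)=u_k(z_0)f(\psi^k(z_0))=f(z_0)$ for every $k$; as $W_{(u,\psi)}^k\to P$ in norm, hence pointwise by \eqref{global}, this gives $(Pf)(z_0)=f(z_0)$. Because $u_\infty(z_0)=1$ this forces $\phi(f)=f(z_0)$, i.e. $Pf=f(z_0)\,u_\infty=W_{(u_\infty,z_0)}f$, which is exactly \eqref{the limit}. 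I expect the main obstacle to be this middle portion — checking that the formal fixed function $u_\infty$ genuinely lies in $\mathcal F_p$, spans the \emph{entire} $1$-eigenspace, and that $1$ is a pole of order exactly one — since this is precisely what upgrades the convergence of the Ces\`aro means from the strong to the operator-norm topology; the explicit form of $u$ from Lemma~\ref{newlem} and the eigenvalue list in Theorem~\ref{spectrum}(i) are what make it go through.
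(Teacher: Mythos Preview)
Your proof is correct, and it takes a genuinely different route from the paper.

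The paper argues as follows: the preceding Proposition (Yosida--Kakutani) already gives that a compact power-bounded operator is uniformly mean ergodic, so the Ces\`aro means converge in norm to \emph{some} projection~$P$; the bulk of the paper's proof then identifies $P$ by establishing the \emph{strong} convergence $W_{(u,\psi)}^n f\to W_{(u_\infty,z_0)}f$ via Lebesgue dominated convergence, using the explicit form $u=e^{a_0+a_1z+a_2z^2}$ from Lemma~\ref{newlem} to bound $|u_n|$ and to show $u_\infty\in\mathcal F_p$ directly from the growth condition $|a_2|<\tfrac{1-|a|^2}{2}$.

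Your argument is spectral: you read off $\sigma(W_{(u,\psi)})=\{0\}\cup\{a^m:m\ge0\}$ from Theorem~\ref{spectrum}(i), show $\ker(I-W_{(u,\psi)})=\CC\,u_\infty$ (and in the process obtain $u_\infty\in\mathcal F_p$ as a by-product, since it coincides with a normalised eigenfunction), verify that $1$ is a simple pole by the evaluation-at-$z_0$ trick, and then use the Riesz decomposition together with Gelfand's formula on the complementary invariant subspace to get the stronger conclusion $\|W_{(u,\psi)}^k-P\|\to0$. The identification $P=W_{(u_\infty,z_0)}$ via the fixed point $z_0$ is the same in spirit as the paper's, but carried out in operator norm rather than pointwise.

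What each approach buys: the paper's argument is more self-contained and measure-theoretic, needing only dominated convergence and the prior Yosida--Kakutani proposition; yours is cleaner once the spectral Theorem~\ref{spectrum} is in hand, avoids all integration, and yields the sharper statement that the powers themselves converge in norm (not just their Ces\`aro means). Your observation that the pointwise fixed equation $u\cdot h\circ\psi=h$ for \emph{entire} $h$ already forces $h\equiv0$ once $h(z_0)=0$ (via the order-of-zero argument) is what lets you sidestep a separate verification that $u_\infty\in\mathcal F_p$.
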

                \begin{proof}
                To prove \eqref{the limit}, we argue as follows. First  observe  that $u_\infty$ is a well-defined product as  $W_{(u,\psi)}$ is power bounded, Lemma~\ref{lem2} and \eqref{global} imply
                \begin{align*}
                |u_\infty(z)|= \lim_{n\to \infty}|u_n(z)|\lesssim \lim_{n\to \infty} e^{\frac{1}{2}|z|^2 } \|u_n\|_p \lesssim e^{\frac{1}{2}|z|^2 }.
                                                \end{align*}
 For  each $f\in \mathcal{F}_p$    we claim that
     \begin{eqnarray}
     \label{claim}
\lim_{n \rightarrow \infty} \| W_{(u,\psi)}^{n}f - W_{(u_\infty, z_0)} f \|_{p}=0.
\end{eqnarray}Observe that \eqref{claim} implies  \eqref{the limit}. Thus, we proceed to prove the claim by considering
 two different cases.  Let $p<\infty$.  Since $W_{(u,\psi)}$ is power bounded, by Lemma~\ref{lem2}, $u_n$  belongs to $ \mathcal{F}_{p}$ for all $n\in \NN$.   On the other hand, using the representation of $u_n$ in \eqref{exp} and applying  Theorem~\ref{thm2} and Lemma~\ref{techlemma}

\begin{align*}
|u_{n}(z)|\lesssim  e^{|t_{n}z|+\left| \frac{a_{2}(1-a^{2n})}{1-a^{2}} \right| |z|^{2}} \leq e^{|t_{n}z|+|a_{2}|\frac{1-|a|^{2n}}{1-|a|^2} |z|^{2}}
\end{align*} where
$$
t_n= \frac{a_1(1-a^n)}{1-a}+   \frac{2a_2b}{1-a}\bigg(\frac{1-a^n}{1-a}-\frac{1-a^{2n}}{1-a^2} \bigg)+ \overline{\frac{(1-a^n)b}{1-\overline{a}}} a^n,
$$
 and letting $n\to \infty $
\begin{align*}
|u_{\infty}(z)|\lesssim  e^{\Big(\frac{2 |a_1|}{|1-a|}+\frac{2|a_2b|}{|1-a|}\big(\frac{2}{|1-a|}+\frac{2}{|1-a^2|}\big)\Big)|z|+\frac{|a_2|}{|1-a^2|}|z|^2 }.
\end{align*}  The compactness condition  $|a_2| <\frac{1-|a|^2}{2}$  implies $\frac{|a_2|}{|1-a^2|}<\frac{1}{2} $,
which shows that   $u_{\infty} \in \mathcal{F}_{p}$.

Moreover, by continuity,
\begin{eqnarray*}
\lim_{n \rightarrow \infty} \left| W_{(u,\psi)}^{n}f (z) - W_{(u_\infty, z_0)} f(z)  \right |=0,
\end{eqnarray*}  and  since $W_{(u,\psi)}$ is power bounded, there is a constant $\alpha > 0$ such that
 for every $n \in \NN$
 \begin{eqnarray*}
\int_{\CC}  \left| W_{(u,\psi)}^{n}f (z) \right|^{p}  e^{-\frac{p}{2}|z|^2} dA(z) \leq \int_{\CC} \alpha^{p} |f(z)|^{p} e^{-\frac{p}{2}|z|^2} dA(z).
\end{eqnarray*}

Consequently, we  have
\begin{eqnarray*}
\int_{\CC}  \left|  W_{(u,\psi)}^{n}f(z) -W_{(u_\infty, z_0)} f(z) \right|^{p}  e^{-\frac{p}{2}|z|^2} dA(z)\quad \quad \quad \quad \quad \quad \quad \quad \quad \quad\nonumber\\
 \leq \int_{\CC} 2^{p}  \bigg(\alpha^{p} \left| f(z) \right|^{p} + \left|W_{(u_\infty, z_0 )}f(z)  \right|^{p} \bigg)  e^{-\frac{p}{2}|z|^2}  dA(z) < \infty.
\end{eqnarray*}
\noindent Applying Lebesgue dominated convergent theorem  on the sequence
\begin{eqnarray}
g_{n}(z): = \left|  W_{(u,\psi)}^{n}f(z) -W_{(u_\infty, z_0)} f(z) \right|^{p}  e^{-\frac{p}{2}|z|^2}, \nonumber
\end{eqnarray}
we get that
\begin{eqnarray}
\lim_{n \rightarrow \infty} \| W_{(u,\psi)}^{n}f-W_{(u_\infty, z_0} f\|^{p}_{p}
 = \frac{p}{2\pi}\lim_{n \rightarrow \infty} \int_{\CC} g_{n}(z)dA(z)
=  \frac{p}{2\pi}\int_{\CC} \lim_{n \rightarrow \infty}g_{n}(z)dA(z)=0\nonumber
\end{eqnarray} as claimed.
\end{proof}

\begin{theorem}
                \label{thmm33}
                Let  $W_{(u,\psi)}$ be a compact power  bounded operator on  $\mathcal{F}_\infty$, and hence   $\psi(z)= az+b$ such that $|a|<1$. Let $u$ be  non-vanishing on $\CC$ such that $u( z_0)=1$ where $z_0= b/(1-a)$.  Then
                \begin{align} \label{the limitt}
\lim_{n \rightarrow \infty} \left \| \frac{1}{n} \sum_{k=1}^{n} W_{(u,\psi)}^{k} - W_{(u_\infty,z_0)} \right\|=0, \ \  u_\infty(z)=\prod_{j=0}^\infty u(\psi^j(z)).
\end{align}
 \end{theorem}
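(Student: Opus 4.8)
The plan is to mimic the proof of Theorem~\ref{thmm3} and reduce the norm convergence \eqref{the limitt} to the strong operator convergence
\begin{align*}
\lim_{n\to\infty}\big\|W_{(u,\psi)}^n f-W_{(u_\infty,z_0)}f\big\|_\infty=0\qquad\text{for every }f\in\mathcal{F}_\infty .
\end{align*}
Once this is in hand, the Ces\`aro means $\frac1n\sum_{k=1}^nW_{(u,\psi)}^k$ converge to $W_{(u_\infty,z_0)}$ in the strong operator topology; since $W_{(u,\psi)}$ is compact and power bounded it is uniformly mean ergodic (the Proposition preceding Theorem~\ref{thmm3}), so those Ces\`aro means already converge in operator norm to some projection, and the two limits must agree, yielding \eqref{the limitt}. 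Alternatively, one may upgrade $W_{(u,\psi)}^n\to W_{(u_\infty,z_0)}$ to norm convergence directly: $W_{(u,\psi)}$ being compact, the uniformly bounded family $\{W_{(u,\psi)}^{n}\}$ converges uniformly on the compact set $\overline{W_{(u,\psi)}(B_{\mathcal{F}_\infty})}$, and the identities $W_{(u,\psi)}W_{(u_\infty,z_0)}=W_{(u_\infty,z_0)}W_{(u,\psi)}=W_{(u_\infty,z_0)}$ — which follow from $\psi(z_0)=z_0$, $u(z_0)=1$ and $u\cdot u_\infty\circ\psi=u_\infty$ — turn this into $\|W_{(u,\psi)}^n-W_{(u_\infty,z_0)}\|\to0$, whence the same for the Ces\`aro means.

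A preliminary step, essentially identical to the one in Theorem~\ref{thmm3}, is to note that $u_\infty$ is a well-defined element of $\mathcal{F}_\infty$: by Lemma~\ref{newlem}, $u(z)=e^{a_0+a_1z+a_2z^2}$ with $|a_2|<\frac{1-|a|^2}{2}$, so $u_n(z)=e^{S_n(z)}$ with $S_n$ as in \eqref{exp}; the coefficients of $S_n$ converge as $n\to\infty$, giving $S_n\to S_\infty(z)=A_0+A_1z+A_2z^2$ with $A_2=\frac{a_2}{1-a^2}$, and since $|1-a^2|\ge1-|a|^2$ we get $|A_2|<\frac12$, so $|u_\infty(z)|e^{-|z|^2/2}$ decays like $e^{-(\frac12-|A_2|)|z|^2}$ and $u_\infty\in\mathcal{F}_\infty$; the hypothesis $u(z_0)=1$ at the fixed point $z_0=\frac{b}{1-a}$ makes $u_n(z_0)=1$ for all $n$ and guarantees convergence of the product defining $u_\infty$.

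To prove the strong convergence I will use $\psi^n(z)=z_0+a^n(z-z_0)$ and $W_{(u,\psi)}^n=W_{(u_n,\psi^n)}$, and split
\begin{align*}
W_{(u,\psi)}^nf(z)-W_{(u_\infty,z_0)}f(z)=u_n(z)\big(f(\psi^n(z))-f(z_0)\big)+\big(u_n(z)-u_\infty(z)\big)f(z_0).
\end{align*}
For the second summand it suffices that $\|u_n-u_\infty\|_\infty\to0$: writing $u_n-u_\infty=e^{S_\infty}\big(e^{S_n-S_\infty}-1\big)$, bounding $|e^{S_n-S_\infty}-1|\le|S_n-S_\infty|e^{|S_n-S_\infty|}$, and using that the $z^2$- and $z$-coefficients of $S_n-S_\infty$ tend to $0$ while the Gaussian $e^{-(\frac12-|A_2|)|z|^2}$ in $|e^{S_\infty(z)}|e^{-|z|^2/2}$ absorbs the subquadratic growth once $n$ is large, the supremum over $z$ tends to $0$; multiplied by the fixed constant $|f(z_0)|$ this term vanishes. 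For the first summand I split $\CC=\{|z|\le R\}\cup\{|z|>R\}$. On $\{|z|>R\}$, since $|f(w)|e^{-|w|^2/2}\le\|f\|_\infty$, I have $|u_n(z)||f(\psi^n(z))|e^{-|z|^2/2}\le\|f\|_\infty\,|u_n(z)|e^{\frac12(|\psi^n(z)|^2-|z|^2)}$, which by the estimate obtained in the proof of Theorem~\ref{thm2} is $\lesssim\|f\|_\infty\,e^{T|z|-\delta|z|^2}$ with $T,\delta>0$ independent of $n$, while $|u_n(z)||f(z_0)|e^{-|z|^2/2}\lesssim|f(z_0)|\,e^{B|z|-\delta'|z|^2}$ (again uniformly in $n$, using $|a_2|<\frac{1-|a|^2}{2}$ and Lemma~\ref{techlemma} to keep the quadratic exponents negative and $|u(z_0)|=1$ to keep the constant term bounded); hence the contribution of $\{|z|>R\}$ is $<\varepsilon/2$ once $R$ is large, uniformly in $n$. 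On $\{|z|\le R\}$, the uniform bound $|u_n(z)|e^{-|z|^2/2}\le M_0$ (Lemma~\ref{lem2}) together with $\sup_{|z|\le R}|f(\psi^n(z))-f(z_0)|\to0$ — which holds because $\psi^n(z)\to z_0$ uniformly on $\{|z|\le R\}$ and $f$ is continuous — gives a contribution $<\varepsilon/2$ for all large $n$. This establishes the strong convergence and completes the proof.

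The main obstacle is the tail estimate over $\{|z|>R\}$. For $1\le p<\infty$ in Theorem~\ref{thmm3} this is handled by Lebesgue's dominated convergence theorem, but here one must instead produce $n$-independent majorants $e^{T|z|-\delta|z|^2}$ for both $|u_n(z)|e^{\frac12(|\psi^n(z)|^2-|z|^2)}$ and $|u_n(z)|e^{-|z|^2/2}$, and it is exactly compactness of $W_{(u,\psi)}$ (equivalently $|a_2|<\frac{1-|a|^2}{2}$, equivalently $\inf_n(q_n-|p_n|)>0$ by Lemma~\ref{techlemma}) that forces the quadratic exponents to stay uniformly negative; this is also why the statement is restricted to non-vanishing symbols and compact operators.
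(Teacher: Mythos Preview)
Your proof is correct, but it departs from the paper's argument in a genuine way. The paper does \emph{not} work directly on $\mathcal{F}_\infty$: it passes to the closed subspace $\mathcal{F}_0=\{f\in\mathcal{F}_\infty:\lim_{|z|\to\infty}|f(z)|e^{-|z|^2/2}=0\}$, establishes the strong convergence $W_{(u,\psi)}^nf\to W_{(u_\infty,z_0)}f$ only for $f\in\mathcal{F}_0$ (the tail estimate there relies on $f\in\mathcal{F}_0$, i.e.\ on $|f(z)|e^{-|z|^2/2}\to 0$), and then invokes the bidual identification $\mathcal{F}_\infty\cong\mathcal{F}_0''$ together with $\|T\|=\|T''\|$ to transfer the resulting norm convergence of the Ces\`aro means from $\mathcal{F}_0$ to $\mathcal{F}_\infty$. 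Your argument bypasses $\mathcal{F}_0$ and biduality altogether: you obtain tail control directly for arbitrary $f\in\mathcal{F}_\infty$ by exploiting the uniform Gaussian decay $|u_n(z)|e^{\frac12(|\psi^n(z)|^2-|z|^2)}\lesssim e^{T|z|-\delta|z|^2}$ and $|u_n(z)|e^{-|z|^2/2}\lesssim e^{B|z|-\delta'|z|^2}$, which indeed follow from $|a_2|<\tfrac{1-|a|^2}{2}$, Lemma~\ref{techlemma} (giving $\inf_n(q_n-|p_n|)>0$ and $\sup_n|p_n|\le|a_2|/(1-|a|^2)<\tfrac12$), and $|u(z_0)|=1$ (keeping $c_n$ bounded). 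This is more hands-on but also more self-contained; the paper's route is structurally cleaner but leans on the $\mathcal{F}_0$--bidual machinery. Your alternative upgrade of $W_{(u,\psi)}^n\to W_{(u_\infty,z_0)}$ to norm convergence via compactness of $\overline{W_{(u,\psi)}(B_{\mathcal{F}_\infty})}$ and the absorption identities $W_{(u,\psi)}W_{(u_\infty,z_0)}=W_{(u_\infty,z_0)}W_{(u,\psi)}=W_{(u_\infty,z_0)}$ is also valid and is not present in the paper; it gives a stronger conclusion (convergence of the iterates themselves, not just the Ces\`aro means) with essentially no extra work.
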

 \begin{proof}
  For this,  we consider the subspace  $ \mathcal{F}_{0}$ of $ \mathcal{F}_{\infty}$ defined  by
\begin{align*}
\mathcal{F}_{0}= \lbrace f \in \mathcal{F}_{\infty}: \lim_{|z|\rightarrow \infty}|f(z)|e^{-\frac{1}{2}|z|^2}=0\rbrace.
\end{align*}
This subspace is closed in  $ \mathcal{F}_{\infty}$ and it contains polynomials. Moreover, the polynomials are dense in  $ \mathcal{F}_{0}$, and  $ \mathcal{F}_{\infty}$ is canonically isomorphic to the bidual of $ \mathcal{F}_{0}$; see for details in \cite{BS}. We proceed to show  first that \eqref{claim} holds for each $f \in \mathcal{F}_{0}$.
It is easy to see that as $n \to \infty$
\begin{align*}
\psi^{n}(z)= a^{n} z + z_0(1-a^{n}) \rightarrow z_0
\end{align*}
uniformly on compact subsets of $\CC$.

Next, we show that $u_n\to u_{\infty}$ uniformly on compact subset of $\CC$ also. Since  $u_{\infty} \in \mathcal{F}_{1}$,
\begin{small}
\begin{eqnarray}
\int_{\CC}  \left|u_{n}(z) -  u_{\infty}(z)\right|  e^{-\frac{1}{2}|z|^2} dA(z) \leq \int_{\CC}2 \left( \alpha +|u_{\infty}(z)| \right) e^{-\frac{1}{2}|z|^2} dA(z) < \infty.
\label{lebegue two}
\end{eqnarray}
\end{small}
Applying Lebesgue dominated convergence theorem, for $z \in K,$ where $K$ is compact subset of $\CC,$
\begin{eqnarray}
\lim_{n \to \infty} |u_{n}(z)-u_{\infty}(z)| \leq \lim_{n \to \infty} e^{\frac{1}{2}|z|^{2}}\|u_{n}-u_{\infty}\|_{1}
\leq \left(\max_{z \in K}e^{\frac{1}{2}|z|^{2}}\right)\lim_{n \to \infty}\|u_{n}-u_{\infty}\|_{1} \nonumber\\
 = \left(\max_{z \in K}e^{\frac{1}{2}|z|^{2}}\right)\lim_{n \to \infty}\int_{\CC}|u_{n}(w)-u_{\infty}(w)|e^{-\frac{1}{2}|w|^{2}}dA(w)=0. \nonumber
\end{eqnarray}
From this we have
\begin{align*}
u_n(z)f(\psi^{n}(z))\rightarrow  u_\infty(z)f ( z_0 )
\end{align*}
uniformly on the compact subsets of $\CC$. That is, for each compact set $K$ in $\CC$,
\begin{align} \label{six}
 \sup_{z \in K} \left|u_n(z)f(\psi^{n}(z))- u_\infty(z)f ( z_0 ) \right| \rightarrow 0
\end{align}
as $n \rightarrow \infty$.

 Next, with  $f\in \mathcal{F}_0$ and each  $n$,
 \begin{align*}
 \lim_{|z|\to \infty}  |W_{(u,\psi)}^n f(z)|e^{-\frac{1}{2}|z|^2}= \lim_{|z|\to \infty}|u_n(z)|\bigg|f\Big(a^n z+  z_0(1-a^n)\Big)\bigg|^2 e^{-\frac{1}{2}|z|^2}\quad\quad \quad  \\
 \leq \sup_{z\in \CC}\bigg(|u_n(z)|e^{\frac{1}{2}\big(|a^n z+ z_0(1-a^n)|^2-|z|^2\big)}\bigg)\quad\quad \quad\quad\\
 \times \lim_{|z|\to \infty} \bigg|f\Big(a^n z+ v\Big)\bigg|^2 e^{-\frac{1}{2}|a^n z+ z_0(1-a^n)|^2}\quad\quad\quad \quad\\
 \lesssim \lim_{|z|\to \infty} \bigg|f\Big(a^n z+ z_0(1-a^n)\Big)\bigg|^2 e^{-\frac{1}{2}|a^n z+ v|^2}= 0.\quad\quad \quad
 \end{align*} Note that the last inequality above holds since $W_{(u,\psi)}$ is power bounded the supremum above is uniformly bounded. That is
 \begin{align*}
 \sup_n\sup_{z\in \CC}\Big(|u_n(z)|e^{\frac{1}{2}\big(|a^n z+ z_0(1-a^n)|^2-|z|^2\big)}\Big)\lesssim |u(z_0)|^n \leq 1.
 \end{align*}
 Furthermore,   $u_{\infty}\in \mathcal{F}_{0} $  since it belongs to $\mathcal{F}_{p}$  for all $p\leq \infty.$

Now given $\varepsilon >0,   f \in \mathcal{F}_{0}$ and  since $u_\infty \in \mathcal{F}_{0}$, we can  find $r_0 > 0$ such that $|W_{(u,\psi)}^n f(z)|e^{-\frac{1}{2}|z|^2} < \varepsilon/2$ and $|u_\infty(z)||f(z_0)| e^{-\frac{1}{2}|z|^2} < \varepsilon/2$ whenever $|z|>r_0$. Then, for each  $|z| > r_0$ and $n\in  \NN$, we have
\begin{align*}
\left| W_{(u,\psi)}^n f(z) - u_\infty(z)f(z_0) \right|e^{-\frac{1}{2}|z|^2} \leq \big| W_{(u,\psi)}^n f(z)\big| e^{-\frac{1}{2}|z|^2}\quad \quad \quad \quad \\
+ \bigg|u_\infty(z)f(z_0)\bigg| e^{-\frac{1}{2}|z|^2} <\varepsilon.
\end{align*}
We apply (\ref{six}) to the compact set $K_0= \{z \in \CC :  \ |z| \leq r_0  \}$ to find $n_0$ such that if $z \in K_0$ and $n \geq n_0$ we have
\begin{align*}
\left|u_n(z)f(\psi^{n}(z))-u_\infty(z)f ( z_0) \right| < \frac{\varepsilon}{2S},
\end{align*}
with $ S:=\max_{z \in K_0} e^{-\frac{1}{2}|z|^2}$. If $n \geq n_0$ and $z \in \CC$, we have
\begin{align*}
\left|W_{(u,\psi)}^nf(z) - u_\infty(z)f ( z_0)  \right|e^{-\frac{1}{2}|z|^2} < \varepsilon.
\end{align*}
Thus \begin{align*}
\lim_{n \rightarrow \infty} \| W_{(u,\psi)}^nf - u_\infty f ( z_0) \|_{ \infty}=0.
\end{align*}
Next,  we show \eqref{the limitt} for $p=\infty.$ Since  $\mathcal{F}_{\infty}$ is canonically isomorphic  to the bidual of $\mathcal{F}_{0}, $ and the bitranspose operator $W^{''}_{(u.\psi)}$ of
$W_{(u,\psi)}: \mathcal{F}_{0} \rightarrow \mathcal{F}_{0} $ coincides  with  weighted composition operator $W_{(u,\psi)}: \mathcal{F}_{\infty} \rightarrow \mathcal{F}_{\infty}$, the conclusion follows from the well-known fact that
$\|T \|= \|T^{'} \|= \| T^{''}\|$ for any bounded operator $T$ on a Banach space.
\end{proof}
The preceding results assure that $W_{(u,\psi)}$ with  $\  \psi(z)= az+b, \; |a|<1$ is always uniformly mean ergodic whenever it is compact and  power bounded.  Now,  we consider  the case
when $\psi(z)= az+b$ and $|a|=1$. Note that power boundedness in this case  implies that either $|u(0)|=e^{-\frac{|b|^2}{2}}$ or $|u(0)|<e^{-\frac{|b|^2}{2}}$.   In 1939,  Lorch \cite{Lorch} proved that every power bounded operator on a reflexive Banach space is mean ergodic. The same result was latter obtained in  reflexive Frechet spaces \cite{ABJ}. Accordingly, as  the spaces $\mathcal{F}_p$ are reflexive  for all $1<p<\infty$,  every power bounded $W_{(u,\psi)}$ is mean ergodic. Thus, for such spaces we will consider conditions under which the ergodicity becomes uniform.
                \begin{theorem}
                \label{thm4}
                \begin{enumerate}
                \item
                Let $1\leq p\leq  \infty$ , $\psi(z)= az+b$ with $|a|= 1$ and             $|u(0)|< e^{-\frac{|b|^{2}}{2}}$. Then   $W_{(u,\psi)}$ is uniformly mean ergodic on $ \mathcal{F}_p$, and
  \begin{align*}
\lim_{n\to \infty}\bigg\| \frac{1}{n} \sum_{k=0}^n W_{(u,\psi)}^{k}  \bigg\|=0.
  \end{align*}
  \item Let $1\leq p\leq \infty$,and   $\psi(z)= az$ with $ |a|=1$. If  both $u(0)$ and $a$  are roots of unity, then $W_{(u,\psi)}$ is uniformly mean ergodic on $ \mathcal{F}_p$.
\end{enumerate}
    \end{theorem}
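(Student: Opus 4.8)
The plan is to handle the two parts separately, in each case reducing to a transparent operator whose Ces\`aro means are easy to control directly.

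\emph{Part (i).} The point is that the hypotheses $|a|=1$ and $|u(0)|<e^{-|b|^2/2}$ force $W_{(u,\psi)}$ to be a strict contraction whose powers decay geometrically. Concretely, I would invoke Theorem~\ref{thm1}: since $|u(0)|\le e^{-|b|^2/2}$ the operator is power bounded, so \eqref{ittt} gives $\|W_{(u,\psi)}^n\|=\big(|u(0)|e^{|b|^2/2}\big)^n=:r^n$ with $r<1$. Then
\[
\Big\|\frac1n\sum_{k=0}^n W_{(u,\psi)}^k\Big\|\le\frac1n\sum_{k=0}^n r^k\le\frac1{n(1-r)}\longrightarrow0,
\]
so $W_{(u,\psi)}$ is uniformly mean ergodic with limit projection $P=0$, which is exactly the displayed statement. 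Nothing in this step depends on $p$, so it covers all $1\le p\le\infty$ at once.

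\emph{Part (ii).} Here $\psi(z)=az$ has $b=0$ and $|a|=1$, so by Lemma~\ref{lem0} (equivalently, from \eqref{bounded} and Liouville's theorem) the multiplier must be the constant $u\equiv u(0)$; thus $W_{(u,\psi)}=u(0)\,C_\psi$, where $C_\psi$ is the composition operator induced by the rotation $z\mapsto az$, an invertible isometry of $\mathcal{F}_p$. Consequently $W_{(u,\psi)}^n=u(0)^n C_{\psi^n}$ with $\psi^n(z)=a^nz$. Since both $u(0)$ and $a$ are roots of unity, I would choose $q\in\NN$ with $u(0)^q=1$ and $a^q=1$ (the least common multiple of the two orders); then $\psi^q=\mathrm{id}$ and $u_q\equiv u(0)^q=1$, so $W_{(u,\psi)}^q=I$ — the operator is periodic. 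For a periodic operator the Ces\`aro means converge in operator norm by a standard averaging argument, which I would spell out: set $P:=\frac1q\sum_{j=0}^{q-1}W_{(u,\psi)}^j$, note $\sum_{m=1}^q W_{(u,\psi)}^m=qP$ by periodicity, and for $n=mq+s$ with $0\le s<q$ write $\sum_{m'=1}^n W_{(u,\psi)}^{m'}=mqP+\sum_{j=1}^{s}W_{(u,\psi)}^j$; dividing by $n$ yields $\big\|\frac1n\sum_{m'=1}^n W_{(u,\psi)}^{m'}-P\big\|=O(1/n)\to0$. Hence $W_{(u,\psi)}$ is uniformly mean ergodic on every $\mathcal{F}_p$, $1\le p\le\infty$, with $P$ the Riesz projection onto $\ker(I-W_{(u,\psi)})$. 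As a sanity check this agrees with Theorem~\ref{spectrum}(ii): for $b=0$ and $a\neq1$ a root of unity, $\sigma(W_{(u,\psi)})=\{u(0)a^m:m\in\NN_0\}$ is a finite set on the unit circle, consistent with $W_{(u,\psi)}$ having finite order.

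\emph{Expected difficulty.} There is no deep obstacle; the proof is essentially bookkeeping on top of Theorem~\ref{thm1}, Lemma~\ref{lem0} and the elementary theory of periodic operators. The two points needing a little care are: verifying that the norm identity \eqref{ittt} is applied correctly for all $|a|=1$ and not merely $a=1$ (this is already established inside the proof of Theorem~\ref{thm1}); and, in Part (ii), first pinning down that $u$ must be constant before the periodicity argument can be launched. Once those are in place the uniform limit follows uniformly in $p$, since every remaining step is purely operator-theoretic.
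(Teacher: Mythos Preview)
Your proposal is correct and follows essentially the same route as the paper: for part (i) you bound the Ces\`aro means by a geometric series using the norm estimate from Theorem~\ref{thm1}, exactly as the paper does (the paper also writes $\lesssim$ rather than equality, since for $a\neq1$ only the inequality \eqref{alwww} is established); for part (ii) you run the same periodicity argument, with the only cosmetic difference that you make explicit via Lemma~\ref{lem0} that $u$ is constant before invoking periodicity, whereas the paper leaves this implicit.
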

     By a result in \cite{MMJ}, the   composition operator $C_\psi$ is not uniformly mean ergodic on $ \mathcal{F}_\infty$ whenever  $|a|= 1$. Now the weight function $u$ makes it possible to enrich  uniformity by taking the value  $|u(0)|$ smaller.\\
      \begin{proof}
   (i)  Applying the assumption along with \eqref{it}
                                 \begin{align*}
  \lim_{n\to \infty}\bigg\| \frac{1}{n} \sum_{k=0}^n W_{(u,\psi)}^{k}  \bigg\|\leq  \lim_{n\to \infty} \frac{1}{n} \sum_{k=0}^n\bigg\| W_{(u,\psi)}^{k}  \bigg\| \lesssim \lim_{n\to \infty} \frac{1}{n} \sum_{k=0}^n\left(|u(0)| e^{\frac{|b|^{2}}{2}} \right)^{k}\\
  \leq\lim_{n\to \infty}\frac{2n^{-1}}{1-|u(0)| e^{\frac{|b|^{2}}{2}}}= 0 \quad \quad
  \end{align*} as claimed.\\
(ii) By assumption there exist  numbers $m, N \in \NN $ such that $a^{N}=1=u(0)^m$. Consider the smallest positive integer $N_0\leq mN$ such that $a^{N_0}=u(0)^{N_0}=1 $.  In this case the sequence $W_{u,\psi}^{n}$ is periodic with period $N_0$. Any  $n \in \NN $ can be written  in the form of $n = N_0l+j $ for some $ l \in  \NN $ and  $j=0,1,2,...,N_0-1.$ Thus
\begin{eqnarray*}
\lim_{n \rightarrow \infty}  \left \| \frac{1}{n} \sum_{k=1}^{n} W_{(u,\psi)}^{k}  - \frac{1}{N_0} \sum_{k=1}^{N_0} W_{(u, \psi)}^{k} \right\|= \lim_{l \rightarrow \infty} \frac{1}{(N_0l +j)} \left \|  \sum_{k=1}^{j} W_{(u, \psi)}^{k}  - \frac{j}{N_0} \sum_{k=1}^{N_0} W_{(u,\psi)}^{k} \right\|\nonumber \\
  \leq \lim_{l \rightarrow \infty}  \frac{1}{(N_0l +j)}\left( \sum_{k=1}^{j}   \left \|  W_{(u,\psi)}^{k}  \right\|  + \frac{j}{N_0} \sum_{k=1}^{N_0} \left \|   W_{(u,\psi)}^{k} \right\| \right)  = 0 \nonumber
\end{eqnarray*} and completes the proof.
\end{proof}

Next,  we consider the cases when the uniform ergodicity  fails. We may first recall that a Banach space $X$ is a Grothendieck space if every sequence $(x_{n})$ in $X'$  which is convergent to 0 for the weak topology  $ \sigma (X^{'} , X) $ is also convergent to 0 for the weak topology $ \sigma (X^{'} , X^{''}) $. The space $X$ has the Dunford-Pettis property if for any sequence $(x_{n})$ in $X$ which is convergent to $0$ for the weak topology  $ \sigma (X , X^{'}) $ and any sequence $(x_{n}^{'})$ in $X^{'}$ which is convergent to $0$ for the weak topology  $ \sigma (X^{'} , X^{''}) $ one gets $  \lim_{n \rightarrow \infty} x_{n}^{'}(x_{n})=0$.
The spaces  $ \ell^{\infty}$ or $H^{\infty}(\mathbb{D})$ are examples of Grothendieck spaces with Dunford-Pettis property \cite{HP}.
\begin{theorem}
\label{thm5}
 Let $1\leq p\leq \infty$ and $W_{(u,\psi)}$ is power  bounded on $\mathcal{F}_{p}$ with  $\psi(z)= az$, $ |a|=1$. Then $W_{(u,\psi)}$ is
 \begin{enumerate}
 \item mean ergodic on $ \mathcal{F}_p$ for all  $1\leq p < \infty$ if   $1\neq u(0) a^m$ for all $m\in \NN_0$, and  for each $f\in \mathcal{F}_p$
  \begin{align*}
  \lim_{n\to \infty}\bigg\| \frac{1}{n} \sum_{k=1}^n W_{(u, \psi)}^{k} f \bigg\|_p=0,
  \end{align*}
  \item   mean ergodic on $ \mathcal{F}_p$ for all  $1\leq p< \infty$ if   $u(0)=1$ and $a$ is  not root of unity, and for each $f\in \mathcal{F}_p$
  \begin{align*}
  \lim_{n\to \infty}\bigg\| \frac{1}{n} \sum_{k=1}^n W_{(u, \psi)}^{k} f-f(0) \bigg\|_p=0,
  \end{align*}
\item not uniformly mean ergodic on $\mathcal{F}_{p}$ for all $1\leq p\leq \infty$, and not mean ergodic on $ \mathcal{F}_\infty$  whenever    $1\neq u(0) a^m$ for all $m\in \NN_0$.
\end{enumerate}
  \end{theorem}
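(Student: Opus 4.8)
The plan is to start from a reduction. Because $\psi(z)=az$ has $b=0$, Lemma~\ref{lem0} forces $u$ to be the constant $u(0)$, so $W_{(u,\psi)}=u(0)\,C_{\psi}$ with $C_{\psi}f(z)=f(az)$; since $|a|=1$ the substitution $w=az$ shows $C_{\psi}$ is a surjective isometry of each $\mathcal{F}_p$, hence $\|W_{(u,\psi)}^{n}\|=|u(0)|^{n}$ (in agreement with \eqref{ittt}), and power boundedness just means $|u(0)|\le 1$. The mechanism behind the whole proof is that $W_{(u,\psi)}$ is diagonalised by the monomials: $W_{(u,\psi)}^{k}z^{m}=\lambda_{m}^{k}z^{m}$ with $\lambda_{m}:=u(0)a^{m}$, so the Ces\`aro means act by $\frac1n\sum_{k=1}^{n}W_{(u,\psi)}^{k}z^{m}=\tau_{n}(\lambda_{m})\,z^{m}$, where $\tau_{n}(\lambda):=\frac1n\sum_{k=1}^{n}\lambda^{k}$.

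For (i), the hypothesis $\lambda_{m}\ne 1$ for every $m$ gives $\tau_{n}(\lambda_{m})\to 0$ for each fixed $m$ --- by the geometric bound $|\tau_{n}(\lambda_{m})|\le \frac1n\frac{|u(0)|}{1-|u(0)|}$ when $|u(0)|<1$, and by the bounded--partial--sums bound $|\tau_{n}(\lambda_{m})|\le \frac{2}{n|1-\lambda_{m}|}$ when $|u(0)|=1$. Hence the Ces\`aro means converge to $0$ on every polynomial; since polynomials are dense in $\mathcal{F}_p$ for $p<\infty$ and $\sup_{n}\|\frac1n\sum_{k=1}^{n}W_{(u,\psi)}^{k}\|\le \sup_{k}\|W_{(u,\psi)}^{k}\|<\infty$ by power boundedness, a routine $\varepsilon/3$ argument pushes this to all $f\in\mathcal{F}_p$, so $W_{(u,\psi)}$ is mean ergodic with projection $0$. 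Part (ii) is the same computation with $u(0)=1$: now $\lambda_{0}=1$ (the constants are fixed) while $\lambda_{m}=a^{m}\ne 1$ with $|\lambda_{m}|=1$ for $m\ge 1$ (so $\tau_{n}(\lambda_{m})\to 0$), and since $f\mapsto f(0)$ is continuous on $\mathcal{F}_p$ by \eqref{global}, density gives $\frac1n\sum_{k=1}^{n}W_{(u,\psi)}^{k}f\to f(0)\,\textbf{1}$ for all $f$.

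For (iii) I would first isolate the essential regime $|u(0)|=1$ with $a$ not a root of unity: if $|u(0)|<1$ then $\|W_{(u,\psi)}^{n}\|=|u(0)|^{n}\to 0$, and if $a^{N}=1$ then $W_{(u,\psi)}^{N}=u(0)^{N}I$, and in either case $\frac1n\sum_{k=1}^{n}W_{(u,\psi)}^{k}$ converges in operator norm, so $W_{(u,\psi)}$ is in fact uniformly mean ergodic; thus assume henceforth $|u(0)|=1$ and $a$ not a root of unity. By Kronecker's theorem --- equivalently by the $|a|=1$, $a\ne 1$ case of Theorem~\ref{spectrum}, which gives $\sigma(W_{(u,\psi)})=\{z:|z|=1\}$ --- the orbit $\{\lambda_{m}:m\in\NN_{0}\}$ is dense in the unit circle, so for each large $n$ we may choose $m_{n}$ with $\lambda_{m_{n}}=e^{i\phi_{n}}$, $|\phi_{n}|\le \pi/n$. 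The Dirichlet--kernel estimate $|\tau_{n}(e^{i\phi})|=\frac1n\big|\frac{\sin(n\phi/2)}{\sin(\phi/2)}\big|\ge \frac2\pi$ for $|\phi|\le \pi/n$ (using $|\sin t|\ge \frac2\pi|t|$ on $[-\frac\pi2,\frac\pi2]$) then gives $\|\frac1n\sum_{k=1}^{n}W_{(u,\psi)}^{k}z^{m_{n}}\|_{p}\ge \frac2\pi\|z^{m_{n}}\|_{p}$, hence $\|\frac1n\sum_{k=1}^{n}W_{(u,\psi)}^{k}\|\ge \frac2\pi$ for all large $n$ and all $1\le p\le\infty$. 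On the other hand the relation $(1-\lambda_{m})c_{m}=0$ for Taylor coefficients forces $\ker(I-W_{(u,\psi)})=\{0\}$, so any mean ergodic projection for $W_{(u,\psi)}$ must be $0$, and uniform mean ergodicity would therefore require $\|\frac1n\sum_{k=1}^{n}W_{(u,\psi)}^{k}\|\to 0$, a contradiction; hence $W_{(u,\psi)}$ is not uniformly mean ergodic on any $\mathcal{F}_p$. (Equivalently, $\frac1n W_{(u,\psi)}^{n}\to 0$ in norm while $1$ is a non-isolated point of $\sigma(W_{(u,\psi)})$, so $\mathrm{ran}(I-W_{(u,\psi)})$ is not closed and Lin's uniform ergodic theorem applies.) Finally, since $\mathcal{F}_{\infty}$ is a Grothendieck space with the Dunford--Pettis property (see the discussion preceding the theorem and \cite{BS,HP}), Lotz's theorem says that a power bounded operator on it can be mean ergodic only if it is uniformly mean ergodic; by the preceding step this is impossible, so $W_{(u,\psi)}$ is not mean ergodic on $\mathcal{F}_{\infty}$.

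The step I expect to be the real obstacle is the failure of mean ergodicity on $\mathcal{F}_{\infty}$ in (iii): it is out of reach of the monomials--plus--density scheme used for (i) and (ii), because polynomials are dense in $\mathcal{F}_{0}$ but not in $\mathcal{F}_{\infty}$, and it rests instead on the Grothendieck/Dunford--Pettis structure of $\mathcal{F}_{\infty}$ together with the Lotz dichotomy between mean and uniform mean ergodicity. A secondary point requiring care is that the argument for (iii) genuinely needs $|u(0)|=1$ with $a$ not a root of unity; in the complementary cases $W_{(u,\psi)}$ is actually uniformly mean ergodic, so the assertion is to be read inside that regime.
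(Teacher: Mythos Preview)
Your treatment of (i) and (ii) coincides with the paper's: reduce to $u\equiv u(0)$ via Lemma~\ref{lem0}, compute the Ces\`aro means on monomials, and pass to all of $\mathcal{F}_p$ by density of polynomials together with power boundedness.

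For (iii) your route is genuinely different. The paper argues by contradiction: assuming uniform mean ergodicity, it invokes Lin and Yosida to obtain the splitting $\mathcal{F}_p=\mathrm{Im}(I-W_{(u,\psi)})\oplus\mathrm{Ker}(I-W_{(u,\psi)})$ with $\mathrm{Im}(I-W_{(u,\psi)})=\mathrm{Ker}\,T_0$, shows that $\mathrm{Ker}(I-W_{(u,\psi)})$ lies inside the constants, and then places the nonzero constants in $\mathrm{Ker}\,T_0$; the $\mathcal{F}_\infty$ step via Lotz is the same as yours. You instead produce a quantitative lower bound $\|T_n\|\ge 2/\pi$ by selecting monomials whose eigenvalue $u(0)a^{m_n}$ lies within angle $\pi/n$ of $1$ (Kronecker density plus a Dirichlet-kernel estimate) and conclude because the only candidate limit projection is $0$; your parenthetical alternative through $1$ being a non-isolated spectral point is exactly the mechanism the paper uses later in Corollary~\ref{cors}. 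What your approach buys is robustness: the paper's contradiction step is delicate, since under the standing hypothesis $u(0)\ne 1$ one actually has $\mathrm{Ker}(I-W_{(u,\psi)})=\{0\}$, and merely locating the constants in $\mathrm{Im}(I-W_{(u,\psi)})$ is then not by itself in conflict with the direct sum. Your observation that (iii) must be read inside the regime $|u(0)|=1$ with $a$ not a root of unity is correct and in fact sharpens the statement: Theorem~\ref{thm4}(i) already gives uniform mean ergodicity when $|u(0)|<1$, and if $a^N=1$ with $u(0)a^m\ne 1$ for all $m$ then $\sigma(W_{(u,\psi)})$ is the finite set $\{u(0),u(0)a,\dots,u(0)a^{N-1}\}$ by Theorem~\ref{spectrum}, which misses $1$, so $I-W_{(u,\psi)}$ is invertible and uniform mean ergodicity again follows.
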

\begin{proof}
(i) and (ii).  We first check when $f$ belongs to the set of monomials.
 If $f= 1$, then the result holds trivially. Thus,   for  $z^m, m\geq 1$ Assume that    $1\neq u(0) a^m$ for all $m\in \NN$.
\begin{align*}
                \bigg\| \frac{1}{n} \sum_{k=1}^n W_{(u,\psi)}^{k} z^m \bigg\|_p=  \bigg\| \frac{1}{n} \sum_{k=1}^n  a^{mk} u_k z^m  \bigg\|_p.
                \end{align*}
  Since $a^{k}u(0)\neq 1$ for each $k \in \NN$ and  $u_k(z)= u(0)^k $, or $u(0)=1$ and $a$ is not root of unity
                  \begin{align*}
             \bigg\| \frac{1}{n} \sum_{k=1}^n  a^{mk} u_k z^m  \bigg\|_p   = \bigg\| \frac{1}{n} \sum_{k=1}^n  u(0)^k a^{mk}  z^m  \bigg\|_p
                             = \Big\| \frac{z^m }{n} \frac{a^m(1- u(0)^na^{mn})}{1- a^{m}u(0)}     \Big\|_p\\
                              \leq   \frac{2\big\|z^m\big\|_p }{n |1- a^{m}u(0)|}\to 0 \quad \quad \quad \quad \quad \quad \quad \quad \quad \quad
                \end{align*} as $n\to \infty$.
Since the set of polynomials is dense in $ \mathcal{F}_{p}$  and $W_{(u,\psi)}$ is power bounded   on  $ \mathcal{F}_{p} $ by Theorem~\ref{thm1}, we have  the result (see e.g.\ Lemma 2.1 in \cite{Beltran}).\\
(iii) Assume now on the contrary  that $W_{u,\psi}$ is uniformly mean ergodic.   Then, by the classical result of Lin \cite{Lin}), $ \text{Im}(I-W_{(u,\psi)})$ is closed where
 $ \text{Im}(I-W_{(u,\psi)})$ denotes the range of $I-W_{(u,\psi)}$. Moreover, by  a result of Yosida \cite{Y} it follows that $Ker T_0 = \overline{Im(I-W_{(u, \psi)}})$ , where $ T_0= \lim_n T_n$ and $ T_n= \frac{1}{n} \sum_{k=1}^{n} W_{(u,\psi)}^{k}  $.  Hence
\begin{align}
\label{closed}
\text{Im}(I-W_{(u,\psi)})= \overline{\text{Im}(I-W_{(u,\psi)})}=Ker T_0= \bigg\{ f \in \mathcal{F}_{p}: \lim_{n \to \infty} \frac{1}{n} \sum_{k=1}^{n} W^{k}_{(u,\psi))}f = 0 \bigg\}\nonumber\\
=\bigg\{ f \in \mathcal{F}_{p} : \lim_{n \to \infty}\frac{1}{n} \sum_{k=1}^{n} u(0)^kf(\psi^k) = 0 \bigg\}.
\end{align}  where the last equality follows after an application of Lemma~\ref{lem0}. Furthermore,
 \begin{align}
 \label{direct}\mathcal{F}_{p}= \text{Im}(I-W_{(u,\psi)}) \bigoplus \text{Ker}(I-W_{(u,\psi)}).\end{align}  We claim that  $\text{Ker}(I-W_{(u,\psi)})$
 contains only the constant functions. Indeed, if $f$ belongs to it, then
  since $u(0) f(az)=f(z) $ for each $ z \in  \CC $, we have
$ u(0) a^{n} f^{(n)}(az) = f^{(n)}(z), $ and hence $ u(0) a^{n} f^{(n)}(0) = f^{(n)}(0). $ This yields $  f^{(n)}(0) = 0 $ for every $ n \in \NN $ since  $u(0)  a^{n} \neq 1 $. If we define $ g(z) = f(z) - f(0),  $ it follows that $ g^{(n)}(0)=0 $  for every $n=0,1,2,...  $. Hence $ g(z)= 0 $ for every $ z \in \CC. $
Therefore,  $ f(z)=f(0)$ for every  $ z \in \CC. $

Next, we show that the constant functions belong to the set in \eqref{closed} and contradicts \eqref{direct}. Thus, if  $h= \alpha$ is a non-zero constant function, then  $  W^{k}_{(u,\psi))}h(z)= u(0)^k h(\psi^k(z))=  u(0)^k\alpha $ for every  $ z \in \CC. $  Thus
\begin{align*}
\bigg\|\frac{1}{n} \sum_{k=1}^{n} h(\psi^k)\bigg\|_p\leq   \frac{\|\alpha\|_p}{n} \bigg|\sum_{k=1}^{n} u(0)^k\bigg|=  \frac{ \|\alpha\|_p}{n}\bigg| \frac{u(0)-u(0)^{n+1}}{1-u(0)}\bigg|\\
\leq   \frac{2 \|\alpha\|_p}{n|1-u(0)|}\to 0 \ \ \text{as}\ \  n \to \infty.
\end{align*}
The case when  $1= u(0)$ and $ a$ is not a root of unity, follows as  a special case from Corollary~\ref{cors} below.

It remains to show that $W_{(u,\psi)}$ is not mean ergodic on $\mathcal{F}_\infty$. Theorem 1.1 in \cite{WL} implies that
$\mathcal{F}_\infty$ is isomorphic to $ \ell^{\infty}$ or $H^{\infty}(\mathbb{D})$. Hence  $\mathcal{F}_\infty$ is Grothendieck spaces with Dunford-Pettis property.  On the  other hand, by a result of Lotz \cite{HP2} every power bounded mean ergodic operator on a Grothendieck Banach space with the Dunford-Pettis property is uniformly mean ergodic. Therefore,  $W_{(u,\psi)}$ is not mean ergodic   on $\mathcal{F}_{\infty}$ .
\end{proof}
We remark that when $|a|=1$,  the operators are isometric  bijective with
$W_{(u,\psi)}^{-1}=  W_{(v,\psi^{-1})} $ where $v(z):= \overline{u(0)}K_{b}(z)$. This can be seen as for every  $f\in \mathcal{F}_{p}$
\begin{align*}
   \|W_{(u,\psi)}f\|_p^p=  \frac{p}{2\pi} |u(0)|^p \int_{\CC} |K_{-\overline{a}b}(z)|^p |f(az+b)|^p e^{-\frac{p}{2}|z|^2}dA(z)\\
   =\frac{p}{2\pi} |u(0)|^p \int_{\CC}  |f(az+b)|^p e^{-\frac{p}{2}|az+b|^2} \bigg( |K_{-\overline{a}b}(z)|^p e^{\frac{p}{2}|az+b|^2-\frac{p}{2}|z|^2}\bigg) dA(z)\\
   =\frac{1}{|a|^2} |u(0)|^p e^{\frac{p|b|^2}{2}}\|f\|_p^p=\|f\|_p^p
   \end{align*} for all $1\leq p<\infty$ which also holds true for $p= \infty$.  This shows  that the operator is a linear  isometry and hence  satisfies the injectivity  condition   $ W_{(u,\psi)}^{-1} W_{(u,\psi)}= I.$
   On the other hand, for  each $f\in \mathcal{F}_{p}$
   \begin{align*}
   W_{(u,\psi)} W_{(v,\psi^{-1})} f(z)= u(z)\overline{u(0)}K_{b}(\psi(z)). f(\psi^{-1}(\psi(z)))\quad \quad\quad \quad \quad \quad\\
   = u(z)\overline{u(0)}K_b(az+b) f(z)
   = |u(0)|^2K_{-\overline{a}b}(z) K_b(az+b)  f(z)= f(z)
   \end{align*} which also shows that  $W_{(u,\psi)} W_{(u,\psi)}^{-1}= I$. As shown below and Theorem~\ref{spectrum}, the spectrum of  some of these class of operators are contained in the unit circle. \\
The uniformly mean ergodic results in part (ii) of Theorem~\ref{thm4} and Theorem~\ref{thm5} deal with when  $\psi(z)= az+b$ form   with  $|a|=1$ and    $ b=0$.  The case for $b\neq 0$ is our next point of interest.
\begin{corollary}\label{cors}
 Let $1\leq p\leq \infty$,    $\psi(z)= az+b$, $|a|=1$ and $a$ is not a root of unity,   $   u\left(\frac{b}{1-a} \right) = 1 $ and hence $|u(0)| = e^{ - \frac{|b|^{2}}{2}}$.  Then $W_{(u,\psi)}$ can not be  uniformly mean ergodic on $\mathcal{F}_{p}$.
 \end{corollary}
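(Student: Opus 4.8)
The plan is to read off the spectrum $\sigma(W_{(u,\psi)})$ from Theorem~\ref{spectrum} and then use the fact that uniform mean ergodicity forces the spectral value $1$ to be isolated. First I would note that the hypotheses put us in the case $|a|=1$, $a\neq1$, with $W_{(u,\psi)}$ bounded, and that by the computation carried out just before Proposition~\ref{prop} the normalization $u\big(\tfrac{b}{1-a}\big)=1$ is equivalent to $|u(0)|=e^{-|b|^2/2}$; hence by Theorem~\ref{thm1} the operator $W_{(u,\psi)}$ is a contraction, so it is power bounded with $\|W_{(u,\psi)}^n\|=1$ for every $n$, and in particular $\tfrac1n\|W_{(u,\psi)}^n\|\to0$.

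Next I would apply part~(ii) of Theorem~\ref{spectrum} together with the identity $u(0)e^{a|b|^2/(a-1)}=u\big(\tfrac{b}{1-a}\big)=1$ recorded in the remarks after that theorem, obtaining
\[
\sigma(W_{(u,\psi)})=\overline{\Big\{u\big(\tfrac{b}{1-a}\big)a^{m}:m\in\NN_0\Big\}}=\overline{\{a^{m}:m\in\NN_0\}}.
\]
Since $a$ lies on the unit circle and is not a root of unity, $\{a^{m}:m\in\NN_0\}$ is the orbit of an irrational rotation and hence dense in $\mathbb{T}$, so $\sigma(W_{(u,\psi)})=\mathbb{T}$; in particular $1\in\sigma(W_{(u,\psi)})$ and $1$ is not an isolated point of the spectrum.

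Finally I would argue by contradiction. If $W_{(u,\psi)}$ were uniformly mean ergodic then, exactly as in the proof of part (iii) of Theorem~\ref{thm5}, Lin's theorem would give that $\mathrm{Im}(I-W_{(u,\psi)})$ is closed, and Yosida's theorem would then yield the splitting $\mathcal{F}_p=\ker(I-W_{(u,\psi)})\oplus\mathrm{Im}(I-W_{(u,\psi)})$ into closed $W_{(u,\psi)}$-invariant subspaces. On the first summand $W_{(u,\psi)}$ is the identity, while on the second $I-W_{(u,\psi)}$ restricts to a bijection of a Banach space onto itself and is therefore invertible, so $1\notin\sigma\big(W_{(u,\psi)}|_{\mathrm{Im}(I-W_{(u,\psi)})}\big)$. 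Hence $\sigma(W_{(u,\psi)})\subseteq\{1\}\cup\sigma\big(W_{(u,\psi)}|_{\mathrm{Im}(I-W_{(u,\psi)})}\big)$ with $1$ outside the closed set on the right, which makes $1$ an isolated point of $\sigma(W_{(u,\psi)})$ --- contradicting $\sigma(W_{(u,\psi)})=\mathbb{T}$. Thus $W_{(u,\psi)}$ is not uniformly mean ergodic. The one step where I would be careful is this last deduction that the splitting isolates $1$ in the spectrum; equivalently one may invoke the classical fact that a power bounded operator is uniformly mean ergodic precisely when $1$ is either in the resolvent set or a simple pole of the resolvent, a pole being automatically isolated in the spectrum, which the accumulation of $\{a^{m}\}$ at $1$ rules out.
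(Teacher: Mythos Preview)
Your proof is correct and follows essentially the same line as the paper's: compute the spectrum via Theorem~\ref{spectrum}, observe that the hypothesis $u\big(\tfrac{b}{1-a}\big)=1$ together with $a$ not a root of unity forces $1$ to be an accumulation point of $\sigma(W_{(u,\psi)})$, and conclude that this is incompatible with uniform mean ergodicity. The only difference is cosmetic: the paper simply invokes Dunford's Theorem~3.16 from \cite{Dunford} for the last step, whereas you spell out the mechanism via Lin's theorem and the Yosida splitting (and mention the equivalent ``$1$ is a pole of the resolvent'' characterization), which amounts to reproving the relevant part of Dunford's result.
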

\begin{proof}
 First observe that in this case Theorem~\ref{spectrum} and since   for $|a|=1$ and $a\neq 1$,
  \begin{align*}
\big|u(0)e^{\frac{a |b^2|}{a-1}}a^m\big|= |u(0)|  e^{|b|^2\Re\big(\frac{a }{a-1}\big)}= |u(0)|  e^{|b|^2\Re\big(\frac{a(\overline{a}-1) }{(a-1)(\overline{a}-1)}\big)}
= |u(0)|  e^{\frac{|b^2|}{2}}= 1,
 \end{align*}the spectrum $\sigma(W_{(u,\psi)})$ is contained in the unit circle $\mathbb{T}$. Furthermore as $a$ is not a root of unity , it follows that $1$ is an accumulation point of the spectrum of $W_{(u,\psi)}$.  Moreover 1 is in the spectrum of $W_{(u,\psi)}$ since $ u(0)e^{\frac{a |b^2|}{a-1}} a^{0} =  u\left(\frac{b}{1-a} \right)  a^{0} = 1$.  Then an application of   Theorem~3.16 of \cite{Dunford} gives the conclusion.
 \end{proof}
\subsection{The multplication operator}\label{mult1}
We now conclude the section by  specializing the main results made in the above section to the multiplication  operator $M_u$  acting on Fock spaces.
Note that from Lemma~2.3 in \cite{TM5}, it is known that the operator $M_u$  is bounded on $\mathcal{F}_p$  if and only if $u$ is a constant function. The same conclusion can be also  easily drawn by applying  the condition in  \eqref{bounded} along with  Liouville's Theorem.
  \begin{corollary}\label{corM}
  Let $1\leq p\leq \infty$ and $u\in \mathcal{H}(\CC)$ such that $M_u$ is bounded on $\mathcal{F}_p$. Then the  following statements  are equivalent.
                                  \begin{enumerate}
                                   \item  $M_u$  is power bounded  on $ \mathcal{F}_p$;
                                   \item   $|u(0)|\leq 1$;
                                  \item $M_u$  is mean ergodic  on $ \mathcal{F}_p$;
                                  \item   $M_u$ is uniformly mean ergodic on $ \mathcal{F}_p$.
                                  \end{enumerate}
                                 \end{corollary}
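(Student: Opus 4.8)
The plan is to reduce the entire statement to the elementary observation that a \emph{bounded} multiplication operator on $\mathcal{F}_p$ is just a scalar multiple of the identity. By Lemma~2.3 of \cite{TM5}, recalled in the remark preceding the corollary, boundedness of $M_u$ forces $u$ to be constant, so $u\equiv u(0)$ and $M_u=u(0)I$. Hence $M_u^n=u(0)^nI$, $\|M_u^n\|=|u(0)|^n$, and the Ces\`aro means are the scalar operators $T_n=c_nI$ with $c_n=\frac{1}{n}\sum_{m=1}^n u(0)^m$. Everything then follows by analyzing the single complex number $u(0)$.

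First I would establish (i)$\Leftrightarrow$(ii): since $\sup_{n}\|M_u^n\|=\sup_n|u(0)|^n$, this supremum is finite if and only if $|u(0)|\leq1$. (Alternatively this is immediate from Theorem~\ref{thm1} applied with $\psi(z)=z$, i.e. $a=1$, $b=0$, since then $|u(0)|\leq e^{-|b|^2/2}=1$.) The implication (iv)$\Rightarrow$(iii) is trivial. For (iii)$\Rightarrow$(ii) I would invoke the general identity $\frac{1}{n}T^n=T_n-\frac{n-1}{n}T_{n-1}$ noted in Section~\ref{power}: mean ergodicity of $M_u$ yields $\frac{1}{n}M_u^n x\to0$ for every $x$, and testing on the constant function $\mathbf 1$ gives $|u(0)|^n/n\to0$, hence $|u(0)|\leq1$.

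The only implication needing a short computation is (ii)$\Rightarrow$(iv). If $u(0)=1$ then $M_u=I$, so $T_n=I$ for all $n$ and $M_u$ is trivially uniformly mean ergodic (limit projection $I$). If $u(0)\neq1$, then $c_n=\frac{u(0)(1-u(0)^n)}{n(1-u(0))}$, and since $|u(0)|\leq1$ we get $|c_n|\leq\frac{2}{n\,|1-u(0)|}\to0$, so $\|T_n-0\|\to0$ and $M_u$ is uniformly mean ergodic with limit projection $0$. Chaining (i)$\Leftrightarrow$(ii)$\Rightarrow$(iv)$\Rightarrow$(iii)$\Rightarrow$(ii) closes the cycle.

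There is essentially no obstacle here once the reduction $M_u=u(0)I$ is made; the one point worth flagging is the borderline case $|u(0)|=1$, $u(0)\neq1$. For genuine rotations one only gets mean (not uniform) ergodicity, as in Theorem~\ref{thm5}, but for the multiplication operator the inducing symbol is the identity, so $f\circ\psi^k=f$ does not oscillate and the scalar averages $c_n$ of the unimodular number $u(0)$ still tend to $0$ because the geometric partial sums stay bounded. This forces uniform mean ergodicity on every $\mathcal{F}_p$, including $p=\infty$, which is exactly what makes all four conditions equivalent for $M_u$.
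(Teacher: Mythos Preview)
Your proof is correct and follows essentially the same approach as the paper: both reduce to $M_u=u(0)I$ via the constancy of $u$, then analyze the scalar Ces\`aro averages $c_n=\frac{1}{n}\sum_{k=1}^n u(0)^k$ via the geometric-sum bound $|c_n|\leq \frac{2}{n|1-u(0)|}$, handling $u(0)=1$ separately. The only difference is the order of implications: the paper proves (i)$\Leftrightarrow$(ii), then (ii)$\Rightarrow$(iii)$\Rightarrow$(iv)$\Rightarrow$(i), whereas you go (i)$\Leftrightarrow$(ii)$\Rightarrow$(iv)$\Rightarrow$(iii)$\Rightarrow$(ii); your chain is slightly more economical since proving (ii)$\Rightarrow$(iv) directly avoids the paper's detour of re-deriving (ii) inside the step (iii)$\Rightarrow$(iv).
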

                                 \begin{proof}
                             The equivalency of  (i) and  (ii) is an immediate deduction from Theorem~\ref{thm1}. Thus, we shall show  (ii) $\Rightarrow$ (iii), (iii)$\Rightarrow$ (iv), and (iv) $\Rightarrow$ (i).  For the first,   simplifying the  proof of  Theorem~\ref{thm4} part (i) for the case  $b= 0$ and  $a=1$, we get $W_{(u,\psi)}^{k}= M_u^k$ and
                               \begin{align}
                               \label{mult}
                \bigg\| \frac{1}{n} \sum_{k=1}^nM_u^k f \bigg\|_p=  \bigg\| \frac{ f}{n} \sum_{k=1}^n   u_k(0)  \bigg\|_p = \bigg\|  \frac{ f}{n} \sum_{k=1}^n  u(0)^k  \bigg\|_p  = \big\| f  \big\|_p\bigg| \frac{1}{n} \sum_{k=1}^n    u(0)^k  \bigg|.
      \end{align}
      Consider first the case when  $u(0)\neq1$ and  $|u(0)|\leq1$. Then
      \begin{align*}
            \frac{\big\| f \big\|_p}{n} \Big| \sum_{k=1}^n    u(0)^k  \Big|=  \frac{\big\|f  \big\|_p}{n} \frac{|u(0)||1-u(0)^n|}{|1-u(0)|}
      \leq  \frac{2\big\|f  \big\|_p}{n|1-u(0)|}  \to 0
      \end{align*} as $n\to \infty$. Thus, $\frac{1}{n} \sum_{k=1}^nM_u^k $ converges pointwise to zero.
     If  $u(0)=1$,  then $M_u$ reduces to the identity map and the assertion follows trivially.

                      \noindent
      Next, we show that (iii) implies (iv). That is the above convergence is uniform on the operator norm. Now the assumption implies that $ \frac{M_u^n f}{n} \to 0$ as $n\to \infty $ for all $f \in  \mathcal{F}_p$.  In particular for $f=\textbf{1}$, the statement
      $\frac{M_u^n\textbf{ 1}}{n} = \frac{u(0)^n}{n} \to 0$ holds  only if  $|u(0)|\leq 1$. Now, for $u(0)=1$, the operator  reduces again to the identity map. Thus, we consider the case  when $u(0)\neq1$, and argue
      \begin{align*}
       \Big\| \frac{1}{n} \sum_{k=1}^nM_u^k  \Big\|=  \sup_{\|f\|_p=1}\Big\| \frac{1}{n} \sum_{k=1}^nM_u^k f \Big\|_p= \sup_{\|f\|_p=1} \Big\| \frac{1}{n} \sum_{k=1}^n u(0)^k f \Big\|_p\\
       \leq \frac{1}{n} \sum_{k=1}^n |u(0)|^k=   \frac{|u(0)||1-|u(0)|^n|}{n(1-|u(0)|)}
      \leq  \frac{2}{n|1-u(0)|}  \to 0,  \ n\to \infty.
      \end{align*}

    \noindent   Now assume that (iv) holds. Then $\|M_u^n\|/n \to 0$ as $n\to \infty$. On the other hand, from \eqref{it}  we get  $\|M_u^n\|= |u(0)|^n$ which implies that    $\|M_u^n\|/n \to 0$  only  when   $|u(0)|\leq 1$. Therefore, by Theorem~\ref{thm1}, the operator is power bounded.
                 \end{proof}
               \subparagraph*{\textbf{Acknowledgement}}
  The authors would like to  thank  Professor Jos\'e Bonet for  useful discussions on the subject. We would  also  like  to  thank  the anonymous reviewers    for  useful   comments  that  helped  us   improve  the  manuscript.

\end{document}